\author{Maciej Satkiewicz}
\title{Transfinite Asymptotic Dimension}
\date{}
\newcommand{\m}[1]{\mathbb{#1}}
\newcommand{\mc}[1]{\mathcal{#1}}
\newcommand{\wtw}{\Leftrightarrow}
\newtheorem{twr}{Theorem}
\newtheorem{lem}{Lemma}
\newtheorem{stw}{Assertion}
\newtheorem{df}{Definition}
\newtheorem{spos}{Observation}
\newtheorem{wn}{Corollary}
\newtheorem{uw}{Note}
\newtheorem{fkt}{Fact}
\newtheorem{hip}{Hipothesis}
\begin{document}
\maketitle
\tableofcontents 

\chapter*{Introduction}
\addcontentsline{toc}{chapter}{Introduction}

Asymptotic property C for metric spaces was introduced by Dranishnikos as generalization of finite asymptotic dimension - asdim. It turns out that this property can be viewed as transfinite extension of asymptotic dimension. The original definition was given by Radul (see~\cite{Rad}). We introduce three equivalent definitions, show that asymptotic property C is closed under products (open problem stated in~\cite{Pearl}) and prove some other facts. Some examples of spaces enjoying countable trasfinite asymptotic dimension are given. We also formulate open problems and state ''omega conjecture'', which inspired most part of this paper. 

\chapter{Three equivalent definitions}
\section{Original definition}\label{sec:pierdef}

Our basic assumption is that metric is finite.
\\\\
The family $\mc{A}$ of subsets of a metric space is called \emph{uniformly bounded} if there exists $C\geq 0$ such that diam$A\leq C$ for every $A\in\mc{A}$; $\mc{A}$ is called \emph{r-disjoint} for some $r>0$ if $d(A_{1},A_{2})\geq r$ for every $A_{1},A_{2}\in\mc{A}$ such that $A_{1}\neq A_{2}$.
\emph{Asymptotic dimension} of metric space $X$ doesn't exceede $n\in\m{N}\cup\{0\}$ (we write asdim$X\leq n$) $\wtw$ for every $r>0$ exists finite set $\mc{U}$ consisting of uniformly bounded families $U_{1},...,U_{n}$\footnote{We shall call $\mc{U}$ briefly an \emph{uniformly bounded family} or \emph{uniformly bounded cover} if $X\subset\bigcup\mc{U}$; our terminology shouldn't raise any confusion later on; note, that we consider only finite $\mc{U}$.}, which sum covers $X$ (so $\mc{U}$ is uniforlmy bounded cover of $X$) and every $U_{i}$ is r-disjoint.

Now we definie transdinite asymptotic dimension (denoted by trasdim).

Let $L$ be an arbitrary set. Let $FinL$ denote the set of all finite nonempty subsets of $L$; $M\subset FinL$. For $\sigma\in\{\emptyset\}\cup FinL$ we definie:

$$M^{\sigma}=\{\tau\in FinL~|~\tau\cup\sigma\in M~\textrm{and}~\tau\cap\sigma=\emptyset\}$$

We write $M^{a}$ instead of $M^{\{a\}}$ for $a\in L$.
Define inductively ordinal number Ord$M$ as follows:

$$\textrm{Ord}M=0\wtw M=\emptyset$$
$$\textrm{Ord}M\leq\alpha\wtw\forall_{a\in L} \textrm{Ord}M^{a}<\alpha$$
$$\textrm{Ord}M=\alpha\wtw\textrm{Ord}M\leq\alpha~\textrm{it is not true that Ord}M<\alpha$$
$$\textrm{Ord}M=\infty\wtw\textrm{Ord}M>\alpha~\textrm{for every ordinal $\alpha$}$$ 

For given metric space $(X,d)$ define set:

$$A(X,d)=\{\sigma\in\textrm{Fin}\m{N}~|~\textrm{there is no uniformly bounded cover $\mc{U}$ of $X$,}$$
$$\textrm{consisting of families $\mc{V}_{i}$ for $i\in\sigma$ such that each $\mc{V}_{i}$ is i-disjoint}\}$$

Now we define trasdim$X$=Ord$A(X,d)$. It is easy to see, that trasdim is indeed generalization on asdim. We'll see this clearly later on.

Dranishnikov introduces asymptotic property $C$ as follows: metric space $X$ has asymptotic property $C$ if for every (strictly increasing) infinite sequence of natural numbers $n_{1}<n_{2}<...$ exists uniformly bounded cover of $X$ consisted of families $\mc{V}_{i}$ for $1\leq i\leq n$ where $n\in\m{N}$ and each $\mc{V}_{i}$ is i-disjoint. It turns out that $X$ has asymptotic property $C \wtw$ exists ordinal number $\alpha$ such that trasdim$X\leq\alpha$ (so trasdim$X<\infty$). We'll see this soon, too.

\section{Definition by rank of a tree}

Let $\omega*$ denote the set of all finite sequences of natural numbers (including empty sequence - denoted by $\emptyset$). By $dom(s)$ we denote domain of sequence $s$ and i-th element of $s$ is as usual $s(i)$; numbering starts from 1. \emph{Tree} (we'll consider only trees on $\omega$) is a set $T\subset\omega*$ closed under prefixes: $\forall_{s\in T}~t\subset s\Rightarrow t\in T$. In particular, every nonempty tree includes empty sequence. Partial ordering of a tree is given by reverse inclusion. Every minimal element of this relation is called \emph{leaf}. $s^{\wedge} n$ denotes sequence obtained from extending $s$ by one element $n$ (so that $s^{\wedge} n$ is \emph{son} of $s$).

Define inductively an ordinal number $rank_T(s)$ (rank) for $s\in T$:
$$\textrm{rank}_{T}(s)=0\wtw\textrm{s is a leaf}$$
$$\textrm{rank}_{T}(s):=sup\{\textrm{rank}(s^{\wedge} n)+1~|~n\in\m{N}\}$$

The rank of the whole tree is defined as: rank$T:=\textrm{rank}_{T}(\emptyset)$. Rank of a tree $T$ is well defined $\wtw$ $T$ is well founded, that means $T$ doesn't have infinite branch. Otherwise we put rank$T:=\infty$. Rank of a tree defined on $\omega$ can have only countable values. We prove both of these facts:

\begin{stw}\label{stw:ranga}If tree $T$ is well founded, then rank$T$ is defined and equal to a certain countable ordinal number.\end{stw}

To prove this we'll need some other facts. Let $<_{lex}$ be lexicographic order on $\omega*$. Define

\begin{df}Kleene-Brouwer ordering ($<_{kb}$) is an ordering on $\omega*$ fulfilling the condition:\end{df}
$$s<_{kb}t\wtw s<_{lex}t~\textrm{or}~(s=_{lex}t~\textrm{i}~t\subsetneq s)$$  

\begin{lem} If tree $T$ is well founded then relation $(<_{kb})_{|T}$ is well ordering (note that the opposite implication is obvious).\end{lem}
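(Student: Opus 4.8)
The plan is to establish the non-obvious direction by contraposition on the failure of well ordering. First I would note that $<_{kb}$ is a \emph{linear} order on all of $\omega*$: any two distinct finite sequences are either $\subseteq$-comparable, in which case the one that is a proper extension is $<_{kb}$-smaller, or they first disagree at some coordinate, which then decides the comparison via $<_{lex}$. Consequently $(<_{kb})_{|T}$ is linear, and for a linear order being a well ordering is equivalent to the absence of an infinite strictly descending chain. So it suffices to derive a contradiction from the existence of a chain $s_{0}>_{kb}s_{1}>_{kb}\cdots$ in $T$ (discarding $s_{0}$ if it equals $\emptyset$, which is the $<_{kb}$-maximum and can only appear first).

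The key technical ingredient I would isolate is the following monotonicity claim: if $s>_{kb}t$ and both $s,t$ have length at least $k$ and agree on their first $k-1$ coordinates, then $s(k)\geq t(k)$. This reads straight off the definition. If $s$ and $t$ are $\subseteq$-incomparable then $t<_{kb}s$ reduces to $t<_{lex}s$; their first disagreement occurs at some coordinate $\geq k$, so either it is exactly $k$ and then $t(k)<s(k)$, or it is later and then $s(k)=t(k)$. If instead one is a prefix of the other, they agree on their common domain, which contains coordinate $k$ since both have length $\geq k$, so $s(k)=t(k)$.

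From here I would construct an infinite branch coordinate by coordinate. By induction on $k$ I claim there are a number $b_{k}$ and an index $N_{k}$ such that every $s_{n}$ with $n\geq N_{k}$ has length $\geq k$ and agrees with $(b_{1},\dots,b_{k})$ on its first $k$ coordinates. In the inductive step, distinctness of the $s_{n}$ forces the unique length-$k$ sequence $(b_{1},\dots,b_{k})$ to occur at most once, so past some index every $s_{n}$ has length $\geq k+1$; on that tail the monotonicity claim makes the numbers $s_{n}(k+1)$ non-increasing, hence eventually constant with some value $b_{k+1}$, which together with the induction hypothesis yields $N_{k+1}$. Each finite prefix $(b_{1},\dots,b_{k})$ is a prefix of some $s_{n}\in T$ and therefore lies in $T$ by prefix-closure, so $(b_{1},b_{2},\dots)$ is an infinite branch of $T$, contradicting well-foundedness.

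The main obstacle I anticipate is the interlocking bookkeeping of the induction: one must simultaneously control the lengths (guaranteeing they eventually exceed $k$, which is exactly where distinctness of the chain is used) and stabilise the coordinate values, and then check that the stabilisation at each level is compatible with the previous ones. The remaining passage, from ``no infinite descending chain'' to ``well ordering'', is the standard one and uses only dependent choice.
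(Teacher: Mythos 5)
Your proof is correct, but at the decisive step it takes a genuinely different --- and in fact more careful --- route than the paper. Both arguments run by contraposition: a failure of well ordering yields an infinite $<_{kb}$-descending chain in $T$, from which one must produce an infinite branch. The paper does this by asserting that every such chain has a subsequence whose terms all lie on one branch of $T$, i.e.\ are pairwise prefix-comparable, and dismisses this as easy to see. You instead construct the branch coordinate by coordinate, using your monotonicity claim to stabilize the $k$-th entries of the chain, so that the resulting branch $(b_1,b_2,\dots)$ need not contain any term of the chain at all. This difference is substantive, because the paper's asserted fact is false as literally stated: the sequences $s_n=(0,\dots,0,1)$ with $n$ zeros form a strictly $<_{kb}$-descending chain in which no two terms are prefix-comparable, so no subsequence (not even of length two) lies on a single branch; ill-foundedness is witnessed only by the ``limit'' branch $(0,0,\dots)$, exactly the kind of object your stabilization argument produces. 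So your proof --- which is the standard one, essentially as in Kechris's book, cited by the paper --- supplies the step that the paper's own proof effectively leaves as a gap, at the price of the bookkeeping you describe: distinctness of the chain terms forces lengths past $k$, monotonicity then gives eventual constancy of the $k$-th coordinates, and prefix-closure places every $(b_1,\dots,b_k)$ in $T$; all of this you handle correctly.
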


\begin{proof}
By contraposition. Suppose that $<_{kb}$ restricted to $T$ is not well ordering. Then there exists strictly decreasing infinite sequence. It's easy to see, that every such sequence in a tree (recall that we consider only trees on $\omega*$) has subsequence consisting of elements of one branch of the tree. Therefore $T$ has an infinite branch and is not well founded. 
\end{proof}

Before we prove assertion~\ref{stw:ranga}, we define rank of a tree in another way, proving the equivalence of both definitions and the assertion at once.

Let $T$ - well founded tree. $T_{0}$ denotes the set of leaves of $T$. For $\alpha>0$ define:

$$T_{\alpha}=\textrm{set of leaves of}~T\backslash\bigcup_{\beta<\alpha}T_{\beta}$$

Rank ($Rank_{T}(s)$) of element $s\in T$ is the unique ordinal $\alpha$, such that $s\in T_{\alpha}$. Rank of the whole tree is by definition equal to $Rank_{T}(\emptyset$). $T$ is well founded and countable and therefore it's clear that $Rank_{T}(s)$ is well defined for every $s\in T$. If $T$ wasn't well founded, we would put Rank$T=\infty$.

\begin{stw}\label{stw:ranga2} For every well founded tree $T$ and every element $s\in T$ $rank_{T}(s)=Rank_{T}(s)$.\end{stw}

\begin{proof}
(Of assertions~\ref{stw:ranga}~and~\ref{stw:ranga2}) By induction on $(<_{kb})_{|T}$. The least element of the tree is a leaf, so its rank is defined; clearly both definitions coincide. Suppose $s$ is $\alpha$-th element of $(<_{kb})_{|T}$ and let our assertion be true for elements lesser than $s$. \emph{A fortiori} its true for each son of $s$. Obviously it allows us to define $rank_{T}(s)$. Note that $s\in T_{sup\{rank(s^{\wedge} n)+1~|~n\in\m{N}\}}$ and so $s\in T_{rank_{T}(s)}$, which concludes our thesis. $T$ is countable and so, by second definition, rank cannot have uncountable values.
\end{proof}

For given metric space $(X,d)$ define three trees:

$$T(X,d)=\{s\in\omega*~|~\textrm{there is no uniformly bounded cover $\mc{U}$ of $X$ consisting of}$$
$$\textrm{families $\mc{V}_{i}$ for $i\in dom(s)$ such that every $\mc{V}_{i}$ is s(i)-disjoint}\}$$
$$T(X,d)^{\rightarrow}=\{s\in\omega*~|~\textrm{there is no uniformly bounded cover $\mc{U}$ of $X$ consisting of}$$
$$\textrm{families $\mc{V}_{i}$ for $i\in dom(s)$ such that every $\mc{V}_{i}$ is s(i)-disjoint and $s$ is nondecreasing}\}$$
$$T(X,d)^{\nearrow}=\{s\in\omega*~|~\textrm{there is no uniformly bounded cover $\mc{U}$ of $X$ consisting of}$$
$$\textrm{families $\mc{V}_{i}$ for $i\in dom(s)$ such that every $\mc{V}_{i}$ is s(i)-disjoint and $s$ is strictly increasing}\}$$

\begin{df}Cover corresponding to finite sequence $s$ will be called s-cover. Similarly a family corresponding to $s$ - not necessarily a cover - will be called s-family. Note, that s-family is a \emph{sequence} of sufficently disjoint families of subsets of given metric space.\end{df}

It's easy to see that ranks of all tree threes above are equal. It follows from the fact, that for every sequence $s\in T(X,d)$ there exists strictly increasing sequence $S\in T(X,d)^\nearrow$ scuh that $dom(s)\subset dom(S)$ and $s(i)\leq S(i)$ for $i\in dom(s)$. We shall prove this in more general version.

\begin{df}t-embbeding is a function f whose domain and codomain are trees; such that two conditions are satisfied: $|dom(s)|=|dom(f(s))|$ (where $|\cdotp|$ denotes power of set) and $t\subset s\Rightarrow f(t)\subset f(s)$.\end{df}

\begin{spos}The image of a tree by t-embbeding is a tree.\end{spos}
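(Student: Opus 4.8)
The plan is to verify directly that the image $f(T)$ satisfies the defining property of a tree, namely closure under prefixes; I would not need any induction on rank here. Fix a t-embbeding $f$ with domain the tree $T$, and write $f(T)=\{f(s)~|~s\in T\}$ for its image. The entire argument hinges on a single idea: the length-preservation clause $|dom(s)|=|dom(f(s))|$ lets me pin down precisely \emph{which} element of $T$ is sent to a prescribed prefix, so that prefixes in the image are themselves images of prefixes.

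First I would take an arbitrary $u\in f(T)$, say $u=f(s)$ with $s\in T$, together with an arbitrary prefix $v\subset u$; the goal is to show $v\in f(T)$. Put $k=|dom(v)|$. Since $v$ is a prefix of $u$ we have $k\leq|dom(u)|=|dom(f(s))|=|dom(s)|$, where the middle equality is the first clause of the definition of a t-embbeding. Next I would let $t$ denote the unique prefix of $s$ with $|dom(t)|=k$, so that $t\subset s$; because $T$ is prefix-closed and $s\in T$, this gives $t\in T$, and hence $f(t)\in f(T)$.

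The crucial step is then to identify $f(t)$ with $v$. By the second clause of the definition, $t\subset s$ forces $f(t)\subset f(s)=u$, so $f(t)$ is a prefix of $u$; and by the first clause, $|dom(f(t))|=|dom(t)|=k=|dom(v)|$. Two prefixes of one and the same sequence $u$ that have equal length must coincide, so $f(t)=v$, and therefore $v=f(t)\in f(T)$, as desired. The degenerate cases are automatic: taking $v=\emptyset$ in the above shows $\emptyset\in f(T)$ whenever $f(T)\neq\emptyset$, and if $T=\emptyset$ then $f(T)$ is the empty tree.

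The step I expect to be the main obstacle is the identification $f(t)=v$, since this is where the length-preservation clause does all the work. Without the hypothesis $|dom(f(s))|=|dom(s)|$ the element $f(t)$ could be a \emph{proper} prefix of $v$, or otherwise mismatched in length, and then the prefix $v$ need not lie in $f(T)$ at all — so one could build a prefix-preserving map whose image fails to be a tree. Matching lengths is exactly what guarantees that the prefix $v$ we are after is genuinely hit by the corresponding prefix $t$ of $s$, rather than merely comparable to some image under $f$.
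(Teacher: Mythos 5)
Your proof is correct and takes essentially the same approach as the paper's, which is a two-line sketch (``the empty sequence maps to the empty sequence, and prefix-closure of the image is clear''): your length-matching argument identifying $f(t)$ with the prescribed prefix $v$ is exactly the detail that makes the paper's ``clear'' step rigorous, using both clauses of the definition just as intended.
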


\begin{proof}
Note that empty sequence must be mapped onto empty sequence. Now the closure under prefixes of the image of f is clear. 
\end{proof}

\begin{twr}\label{twr:twl}For trees $T$ i $Y$ (not necessarily well founded) if there exists t-embbeding $f:T\rightarrow Y$ then $rankT\leq rankY$.\end{twr}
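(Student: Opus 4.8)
The plan is to prove the stronger pointwise statement that $rank_T(s)\leq rank_Y(f(s))$ for every $s\in T$ and then specialize to $s=\emptyset$. Since a t-embedding sends the empty sequence to the empty sequence (as noted in the proof of the Observation), this immediately yields $rankT=rank_T(\emptyset)\leq rank_Y(f(\emptyset))=rank_Y(\emptyset)=rankY$, which is the claim.

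First I would dispose of the non-well-founded cases. If $Y$ is not well founded then $rankY=\infty$ and the inequality is trivial, so I may assume $Y$ is well founded; I then claim $T$ is well founded as well. Indeed, an infinite branch $\emptyset=s_{0}\subsetneq s_{1}\subsetneq s_{2}\subsetneq\cdots$ in $T$ would be carried by $f$ to a chain $f(s_{0})\subset f(s_{1})\subset f(s_{2})\subset\cdots$ in $Y$ whose lengths $|dom(f(s_{k}))|=|dom(s_{k}))|=k$ strictly increase, hence to an infinite branch of $Y$, contradicting well-foundedness. Thus in the remaining case both ranks are genuine countable ordinals by Assertion~\ref{stw:ranga}, and the pointwise inequality makes sense.

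The core is then a transfinite induction establishing the pointwise inequality, which I would run as well-founded induction on $T$ with respect to reverse inclusion (equivalently, along $(<_{kb})_{|T}$, which is a well ordering by the Lemma). The key observation is that $f$ carries sons of $s$ to sons of $f(s)$: if $s^{\wedge}n\in T$, then $f(s)\subset f(s^{\wedge}n)$ by prefix-preservation, and $|dom(f(s^{\wedge}n))|=|dom(s^{\wedge}n)|=|dom(s)|+1=|dom(f(s))|+1$, so $f(s^{\wedge}n)=f(s)^{\wedge}m$ for a unique $m\in\m{N}$. For a leaf $s$ there is nothing to prove since $rank_T(s)=0$. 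For a non-leaf $s$, the induction hypothesis gives $rank_T(s^{\wedge}n)\leq rank_Y(f(s^{\wedge}n))=rank_Y(f(s)^{\wedge}m)$ for each son, whence $rank_T(s^{\wedge}n)+1\leq rank_Y(f(s)^{\wedge}m)+1\leq rank_Y(f(s))$, the last step by definition of $rank_Y(f(s))$ as a supremum over the sons of $f(s)$. Taking the supremum over all sons $s^{\wedge}n$ of $s$ yields $rank_T(s)\leq rank_Y(f(s))$, completing the induction.

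I do not expect a serious obstacle here, as the argument is essentially monotonicity of tree rank under rank-preserving maps. The points that need care are: verifying that $f$ genuinely maps sons to sons, which is exactly where both defining properties of a t-embedding are used — prefix-preservation for the nesting $f(s)\subset f(s^{\wedge}n)$ and length-preservation to guarantee the extension is by a single element; the bookkeeping of the two $\infty$ conventions; and ensuring the induction runs over a well-founded relation. Note finally that $f$ need not be injective, but this causes no difficulty: several sons of $s$ may map to the same son of $f(s)$, yet the supremum estimate above is unaffected.
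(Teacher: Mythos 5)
Your proof is correct and follows essentially the same route as the paper: the pointwise inequality $rank_T(s)\leq rank_Y(f(s))$ established by induction, using the key fact that sons are mapped to sons, with the non-well-founded cases disposed of separately. The only cosmetic differences are that the paper first restricts to the image $f[T]$ (invoking monotonicity of rank under tree inclusion) and inducts on the value of $rank_T(s)$ rather than on the well-founded tree order, while you work directly in $Y$ and spell out the son-to-son verification that the paper leaves implicit.
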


\begin{proof}
It is sufficient to show theorem for $L:=f[T]\subset Y$, because rank is monotonic in respect to inclusion of trees. If $T$ has infinite branch, then $L$ has infinite branch too. Let's assume then, that $T$ and $L$ are well founded (if $L$ is not, then our thesis is automatically true). We show that $\forall_{s\in T}~rank_T(s)\leq rank_L(f(s))$ - by induction on $rank_T(s)$. Zeroth step is obvious. Every other step is also easy - inductive assumption is in force for sons of $s$, which are mapped on sons of $f(s)$.
\end{proof}

In our case let's take the function (going from any of the three defined earlier trees to $T(X,d)^{\nearrow}$) which maps every sequence $s$ into lexicographically least strictly increasing sequence $S$, having the same lenght as ours and such that $s(i)\leq S(i)$ for $i\in dom(s)$. It is t-embbeding, because such sequence $S$ is always member of $T(X,d)^{\nearrow}$.

We put trasdim$X=$rank$T(X,d)$. Let's convince ourselves that both definitions of trasdim are equivalent.

\begin{spos}\label{spos:lacznosc} Let $\sigma,\tau\in FinL$ and $\sigma\cap\tau=\emptyset$. Then $M^{\sigma\cup\tau}=(M^{\sigma})^{\tau}$. If $\sigma\cap\tau\neq\emptyset$ then $(M^{\sigma})^{\tau}=\emptyset$\end{spos}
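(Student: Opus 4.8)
The plan is to verify both claims by unwinding the definition of $M^{(-)}$ directly; no induction or structural argument is needed, so the only real work is keeping track of the union and intersection side-conditions. First I would fix a generic $\rho\in FinL$ and expand what it means for $\rho$ to lie in $(M^{\sigma})^{\tau}$. By definition $\rho\in(M^{\sigma})^{\tau}$ iff $\rho\cup\tau\in M^{\sigma}$ and $\rho\cap\tau=\emptyset$; expanding the first condition again, $\rho\cup\tau\in M^{\sigma}$ means $(\rho\cup\tau)\cup\sigma\in M$ and $(\rho\cup\tau)\cap\sigma=\emptyset$. Thus membership in $(M^{\sigma})^{\tau}$ is governed by exactly three conditions: $\rho\cup\tau\cup\sigma\in M$, the disjointness $(\rho\cup\tau)\cap\sigma=\emptyset$, and the disjointness $\rho\cap\tau=\emptyset$.

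For the first part, assuming $\sigma\cap\tau=\emptyset$, I would observe that $(\rho\cup\tau)\cap\sigma=\emptyset$ splits into $\rho\cap\sigma=\emptyset$ together with $\tau\cap\sigma=\emptyset$, and the latter holds by hypothesis. Hence the three conditions collapse to $\rho\cup(\sigma\cup\tau)\in M$ and $\rho\cap(\sigma\cup\tau)=\emptyset$, where I have merged $\rho\cap\sigma=\emptyset$ and $\rho\cap\tau=\emptyset$ into a single disjointness from $\sigma\cup\tau$. This is precisely the defining condition for $\rho\in M^{\sigma\cup\tau}$, so reading the equivalence in both directions yields the set equality.

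For the second part, suppose $\sigma\cap\tau\neq\emptyset$. Then for every $\rho$ the condition $(\rho\cup\tau)\cap\sigma=\emptyset$ fails immediately, since $\tau\subseteq\rho\cup\tau$ forces $(\rho\cup\tau)\cap\sigma\supseteq\tau\cap\sigma\neq\emptyset$. Therefore no $\rho$ can belong to $(M^{\sigma})^{\tau}$, and the set is empty.

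I do not anticipate any genuine obstacle here: the statement is a bookkeeping identity, and the one point at which to stay alert is the role of the hypothesis $\sigma\cap\tau=\emptyset$. It is exactly this hypothesis that licenses discharging the spurious requirement $\tau\cap\sigma=\emptyset$ arising inside $(M^{\sigma})^{\tau}$ and merging the two separate disjointness conditions on $\rho$ into the single condition $\rho\cap(\sigma\cup\tau)=\emptyset$; its failure is precisely what drives the second part.
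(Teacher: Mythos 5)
Your proof is correct: the three membership conditions you extract for $(M^{\sigma})^{\tau}$ are exactly right, and both the collapse to the defining conditions of $M^{\sigma\cup\tau}$ under the hypothesis $\sigma\cap\tau=\emptyset$ and the immediate failure of $(\rho\cup\tau)\cap\sigma=\emptyset$ in the second case are sound. The paper states this observation without any proof, treating it as a routine consequence of the definition of $M^{\sigma}$, and your definition-unwinding argument is precisely the verification that is being taken for granted.
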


By natural identification of set with strictly increasing sequence we can for a space $(X,d)$ construct a tree basing on set $A(X,d)$ in the following way (the father($\sigma$) is the father of that sequence in any tree such that $\sigma$ belongs to that tree):
$$\sigma\in TA(X,d)\wtw M^{father(\sigma)}\neq\emptyset$$ 

\begin{twr}If $TA(X,d)$ is well founded then $T(X,d)^\nearrow$ is also well founded and we have $OrdA(X,d)^{\sigma}=rank_{TA(X,d)}(\sigma)=rank_{T(X,d)^{\nearrow}}(\sigma)$ for $\sigma\in TA(X,d)$. Also OrdA(X,d)$=\infty\wtw rankTA(X,d)=\infty\wtw rankT(X,d)=\infty$ are satisfied.\end{twr}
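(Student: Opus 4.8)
The plan is to prove the two displayed equalities by parallel transfinite inductions, after first recording the only two geometric inputs, both consequences of the cover definitions. Writing $A:=A(X,d)$ and $C(\rho)$ for the statement ``there is a uniformly bounded cover of $X$ by families $\mc{V}_{i}$, $i\in\rho$, with each $\mc{V}_{i}$ being $i$-disjoint'', I would note: (a) adjoining empty families shows $C(\rho)\Rightarrow C(\rho')$ for $\rho\subseteq\rho'$, so $A$ is closed under passing to subsets; and (b) since an $i'$-disjoint family is \emph{a fortiori} $i$-disjoint for $i\le i'$, relabelling shows $A$ is preserved under increasing any one of its values (keeping them distinct). Everything else is formal manipulation of $\textrm{Ord}$ and $\textrm{rank}$ under the standing hypothesis that $TA(X,d)$ is well founded.

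The backbone is a matching of recursions. For strictly increasing $\sigma$ and $a\notin\sigma$, Observation~\ref{spos:lacznosc} gives $(A^{\sigma})^{a}=A^{\sigma\cup\{a\}}$, while $(A^{\sigma})^{a}=\emptyset$ for $a\in\sigma$; unwinding the definition of $\textrm{Ord}$ then yields $\textrm{Ord}A^{\sigma}=\sup\{\textrm{Ord}A^{\sigma\cup\{a\}}+1\mid a\notin\sigma\}$ whenever $A^{\sigma}\neq\emptyset$, and $\textrm{Ord}A^{\sigma}=0$ otherwise (the spurious terms $a\in\sigma$, each equal to $1$, are absorbed). On the tree side, the defining condition $\sigma\in TA(X,d)\wtw A^{\textrm{father}(\sigma)}\neq\emptyset$ means that a node $\sigma$ has children precisely when $A^{\sigma}\neq\emptyset$, and then exactly the sons $\sigma^{\wedge}a$ with $a>\max\sigma$ are present, so $\textrm{rank}_{TA(X,d)}(\sigma)=\sup\{\textrm{rank}_{TA(X,d)}(\sigma^{\wedge}a)+1\mid a>\max\sigma\}$ in that case and $0$ otherwise. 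Granting the inductive hypothesis $\textrm{rank}_{TA(X,d)}(\sigma^{\wedge}a)=\textrm{Ord}A^{\sigma\cup\{a\}}$ for the sons, the two formulas agree up to the range of the supremum: on the $\textrm{Ord}$ side $a$ runs over all of $\m{N}\setminus\sigma$, on the tree side only over $a>\max\sigma$.

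Reconciling these two ranges is the crux and the step I expect to be the main obstacle. I would isolate it as a monotonicity lemma: writing $S\preceq S'$ when there is a bijection $\phi:S\to S'$ with $\phi(x)\ge x$ for all $x$, I claim $S\preceq S'\Rightarrow\textrm{Ord}A^{S}\le\textrm{Ord}A^{S'}$. Granting it, for any $a\notin\sigma$ I choose $a'>\max\sigma$ with $a'\ge a$ and extend $\textrm{id}_{\sigma}$ by $a\mapsto a'$, obtaining $\sigma\cup\{a\}\preceq\sigma\cup\{a'\}$; hence every term of the wider supremum is dominated by a term of the tree supremum, and the reverse inequality is trivial. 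The lemma itself I would prove by transfinite induction on $\textrm{Ord}A^{S}$ (legitimate under the well-foundedness hypothesis, where these are genuine ordinals): for each $a\notin S$ pick $a'\notin S'$ with $a'\ge a$, extend $\phi$ by $a\mapsto a'$ to get $S\cup\{a\}\preceq S'\cup\{a'\}$, apply the inductive hypothesis (valid since $\textrm{Ord}A^{S\cup\{a\}}<\textrm{Ord}A^{S}$), and conclude $\textrm{Ord}A^{S\cup\{a\}}+1\le\textrm{Ord}A^{S'\cup\{a'\}}+1\le\textrm{Ord}A^{S'}$. The one delicate point — where closure property (b) is genuinely needed, the lemma being false for arbitrary $M$ — is the base case $A^{S}\neq\emptyset\Rightarrow A^{S'}\neq\emptyset$: given $\rho\neq\emptyset$ disjoint from $S$ with $\rho\cup S\in A$, I increase $S$ to $S'$ via $\phi$ and simultaneously push $\rho$ to fresh large values disjoint from $S'$, so closure under increasing places the resulting set in $A$ and exhibits a nonempty element of $A^{S'}$.

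Finally I would treat $T(X,d)^{\nearrow}$ and the $\infty$ dichotomy. Identifying a strictly increasing sequence with its range turns the condition into $\sigma\in T(X,d)^{\nearrow}\wtw\sigma\in A$, which is prefix-closed by (a); moreover $\sigma\in A$ forces $A^{\textrm{father}(\sigma)}\neq\emptyset$ (witnessed by $\{\max\sigma\}$), so $T(X,d)^{\nearrow}\subseteq TA(X,d)$, whence $T(X,d)^{\nearrow}$ is well founded and, by Theorem~\ref{twr:twl} applied to this inclusion, $\textrm{rank}_{T(X,d)^{\nearrow}}(\sigma)\le\textrm{rank}_{TA(X,d)}(\sigma)$. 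Rerunning the recursion of the second paragraph for $T(X,d)^{\nearrow}$ — its sons $\sigma^{\wedge}a$ are present iff $\sigma\cup\{a\}\in A$, and by (a) any absent son has $A^{\sigma\cup\{a\}}=\emptyset$ so contributes only a subsumable $1$ — together with the same monotonicity lemma gives $\textrm{rank}_{T(X,d)^{\nearrow}}(\sigma)=\textrm{Ord}A^{\sigma}$, hence the full equality. For the last chain, $\textrm{Ord}A=\infty$ is by definition the existence of an infinite sequence with every $A^{\{a_{1},\dots,a_{k}\}}\neq\emptyset$, which is exactly an infinite branch of $TA(X,d)$, i.e.\ $\textrm{rank}\,TA(X,d)=\infty$; applying (a) along such a branch shows each $\{a_{1},\dots,a_{k}\}\in A$, producing an infinite branch of $T(X,d)^{\nearrow}$, and since a t-embedding carries infinite branches to infinite branches, the previously noted equality of the ranks of $T(X,d)$, $T(X,d)^{\rightarrow}$ and $T(X,d)^{\nearrow}$ propagates the $\infty$ case to $\textrm{rank}\,T(X,d)$.
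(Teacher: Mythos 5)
Your proposal is correct, and its backbone is the same as the paper's: match the recursion $\textrm{Ord}M=\sup\{\textrm{Ord}(M^{n})+1\mid n\in\m{N}\}$ against the rank recursion via Observation~\ref{spos:lacznosc}, then handle $T(X,d)^{\nearrow}$ and the $\infty$ dichotomy separately. The genuine difference is your monotonicity lemma. The paper passes from $\textrm{Ord}A(X,d)^{\sigma}=\sup\{\textrm{Ord}A(X,d)^{\sigma\cup\{n\}}+1\mid n\in\m{N}\}$ to the tree recursion by ``replacing character strings'', silently identifying a supremum over all $n\notin\sigma$ with a supremum over the sons of $\sigma$, which involve only $n>\max\sigma$ (for $n<\max\sigma$ the node $\sigma\cup\{n\}$ lies on a different branch of $TA(X,d)$ and is not a son of $\sigma$). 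You isolate exactly this discrepancy as the crux and close it with the lemma $S\preceq S'\Rightarrow\textrm{Ord}A^{S}\le\textrm{Ord}A^{S'}$, which is where the geometry (an $R$-disjoint family is $r$-disjoint for $r\le R$) actually enters; the paper uses that fact elsewhere (Theorem~\ref{twr:fraktal}, the t-embedding into $T(X,d)^{\nearrow}$) but never inside this proof. So your route is a completion of the paper's sketch rather than a departure from it: what it buys is that the central ``string replacement'' step becomes an actual argument.

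Two points need tightening, both fixable with tools you already have. First, your transfinite induction runs on the value $\textrm{Ord}A^{S}$ for arbitrary finite $S$, so you must first know these values are genuine ordinals; this does not follow immediately from well-foundedness of $TA(X,d)$ — it is essentially the $\infty$-equivalence relativized to $S$. Either prove it up front by the union-and-sort argument (if $\textrm{Ord}A^{S}=\infty$ one gets a chain of finite sets with nonempty $A$-derivatives; the increasing enumeration of their union has all its initial segments in $A$ by property (a), giving an infinite branch of $T(X,d)^{\nearrow}\subset TA(X,d)$), or re-index the lemma as an induction on an ordinal parameter: for all $S\preceq S'$ and all $\alpha$, $\textrm{Ord}A^{S}\geq\alpha\Rightarrow\textrm{Ord}A^{S'}\geq\alpha$, which needs no finiteness hypothesis at all. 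Second, in your last paragraph the witnessing sequence $(a_{k})$ for $\textrm{Ord}A=\infty$ need not be increasing, so the sets $\{a_{1},\dots,a_{k}\}$ are \emph{not} literally an infinite branch of $TA(X,d)$: consecutive sorted sequences need not be father and son. The repair is the same union-and-sort step just described; you gesture at it with ``applying (a) along such a branch'', but the phrase ``which is exactly an infinite branch'' overstates what is immediate and should be replaced by that one extra sentence.
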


\begin{proof}
We identify sets with strictly increasing sequences. Firstly lets prove the equality\\ $OrdA(X,d)^{\sigma}=rank_{TA(X,d)}(\sigma)$. Notice that $OrdA(X,d)^{\sigma}=0\wtw(\sigma\notin TA(X,d)$ or $\sigma$ is a leaf of $TA(X,d)$)$\wtw(\sigma\notin TA(X,d)$ or rank$_{TA(X,d)}(\sigma)=0$). Also note that OrdM=sup$\{Ord(M^n)+1~|~n\in\m{N}\}$. Using observation~\ref{spos:lacznosc} we have for $\sigma\in TA(X,d)$ equalities
$OrdA(X,d)^{\sigma}=sup\{Ord(A(X,d)^{\sigma})^n+1~|~n\in\m{N}\}=sup\{Ord(A(X,d)^{\sigma^{\wedge n}})+1~|~n\in\m{N}\}$. It suffices to replace the character string ''$OrdA(X,d)$'' by string ''$rank_{TA(X,d)}$'' to realise that our equality holds for every $\sigma\in TA(X,d)$. The second equality and first implication of our thesis stem from the fact that trees $TA(X,d)$ and $T(X,d)^{\nearrow}$ are identical (excluding some neglectable set of leaves of $TA(X,d)$ which don't belong to $T(X,d)^{\nearrow}$ and for which our equality has no sense). Now we have also proven equivalences from our statement, because Ord$M=$Ord$M^\emptyset=rank_{TA(X,d)}(\emptyset)=$rankTA(X,d).
\end{proof}

Preceding theorem proves that both definitions of trasdim are equivalent.

\section{Definition by winning strategy in dimension game}\label{sec:gra}

There are two players, player $A$ and player $B$. A metric space $X$ is given. In the first round of ''dimension game'' for space $X$ player $B$ chooses a natural number $r_{1}$ and player $A$ responds with uniformly bounded cover of $X$ which is $r_{1}$-disjoint and gives number $k_{1}$ - the number of families in that cover.

In general, in n-th round player $B$ chooses number $r_{n}\geq r_{n-1}$ and player $A$ has to repsond with uniformly bounded cover $\mc{U}$ of $X$ consisting of families $V_{1},V_{2},...,V_{k_{n}}$ such that $V_{i}$ is $r_{i}$-disjoint for $1\leq i\leq n$, $V_{j}$ is $r_n$-disjoint for $n\leq j\leq k_{n}$ and give number $k_{n}$ - the power of that cover.

The game ends in n-th round if $k_{n}=n$ and then player $A$ wins. Otherwise - if game doesn't end in any round or player $A$ cannot respond with proper cover - player $B$ wins.

\begin{spos}Trasdim$X<\infty\wtw$ player A has a winning strategy.\end{spos}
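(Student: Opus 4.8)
The plan is to reduce the statement entirely to the well-foundedness of $T(X,d)^{\rightarrow}$, using the equivalence established above that trasdim$X<\infty$ holds iff $T(X,d)^{\rightarrow}$ is well founded (all three trees have equal rank, and the game's constraint $r_{n}\geq r_{n-1}$ means $B$ is building a \emph{nondecreasing} sequence, which is exactly the kind of sequence $T(X,d)^{\rightarrow}$ records). First I would set up the dictionary between plays and nodes. After round $n$ the moves of $B$ form the nondecreasing sequence $s_{n}=(r_{1},\dots,r_{n})\in\omega*$. For $m\geq 0$ let $u_{m}$ be the sequence obtained from $s_{n}$ by appending $m$ further copies of $r_{n}$ (so $u_{0}=s_{n}$, and $u_{m}$ has length $n+m$ and is again nondecreasing). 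By definition a legal answer of $A$ in round $n$ with $k_{n}=n+m$ is exactly a $u_{m}$-cover; hence such an answer exists iff $u_{m}\notin T(X,d)^{\rightarrow}$. In particular $A$ may legally set $k_{n}=n$, and thereby win, iff $s_{n}\notin T(X,d)^{\rightarrow}$.

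With this dictionary the ($\Leftarrow$) direction is a short contraposition. If trasdim$X=\infty$ then $T(X,d)^{\rightarrow}$ has an infinite branch $r_{1}\leq r_{2}\leq\cdots$; letting $B$ play precisely this branch keeps $s_{n}\in T(X,d)^{\rightarrow}$ for every $n$, so by the dictionary $A$ can never set $k_{n}=n$ and hence never wins. Whether the play continues forever or $A$ cannot answer at some round, $B$ wins, so $A$ has no winning strategy.

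For ($\Rightarrow$) assume $T(X,d)^{\rightarrow}$ is well founded and let $A$ follow the strategy ``answer with the least admissible $k_{n}$''. The main obstacle, and the only step that is not purely formal, is to check that $A$ can always answer at all. Given a reached position with $s_{n}\in T(X,d)^{\rightarrow}$, the nodes $u_{0},u_{1},u_{2},\dots$ all extend one another and so lie along a single branch of $\omega*$; were they all in $T(X,d)^{\rightarrow}$ this branch would be infinite, contradicting well-foundedness, so there is a least $m\geq 1$ with $u_{m}\notin T(X,d)^{\rightarrow}$, giving a legal answer with $k_{n}=n+m$. Finally, whatever nondecreasing sequence $B$ plays is itself a branch, so well-foundedness yields a least $N$ with $s_{N}\notin T(X,d)^{\rightarrow}$; at round $N$ the least admissible parameter is $k_{N}=N$, and $A$ wins. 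Thus the strategy is winning, completing the equivalence.
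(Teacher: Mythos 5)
Your proof is correct and follows essentially the same route as the paper, which states this as an observation without proof and immediately afterwards identifies the game tree $G(X,d)$ with $T^{\rightarrow}(X,d)$: your dictionary between plays $(r_1,\dots,r_n)$ (padded by copies of $r_n$) and nodes of $T^{\rightarrow}(X,d)$, together with the well-foundedness argument in both directions, is precisely the content the paper leaves implicit. In short, you have supplied the details of the paper's own intended argument rather than a different one.
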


Winning strategy for player $A$ can be depicted as a tree. Take any nondecreasing infinite sequence $c\in\omega^{\omega}$. Suppose that this sequence is a strategy of player $B$. There exists least $n_{c}$ such that the game can end in $n_{c}$-th round. Let $g(c)$ be the set consisting of all proper prefixes of sequence $(c(1),...,c(n_{c}))$. We define

$$G(X,d)=\bigcup_{c\in\omega^{\omega};~c~nondecreasing}g(c)$$

Easily (from definition of these trees) we have $G(X,d)=T^{\rightarrow}(X,d)$. We put \\trasdim$X:=$rank$G(X,d)$.

\section{Some observations}

With our terminology the reader can easily check that trasdim is indeed generalization of asdim; countable value of trasdim is equivalent to asymptotic property C; it's monotonic in respect to inclusion (trasdim of subspace cannot be greater that trasdim of the whole space). 
It's usefull to realize the characteristic, ''monotonne'' structure of dimension tree $T(X,d)$:

\begin{df}The root of the subtree $L$ (rootL) is the greatest (with repsect to inclusion) element belonging to the intersection of all branches (maximal chains) of the subtree.\end{df} 

\begin{df}Complete subtree is the maximal subtree (with repsect to inclusion) with given root.\end{df}

Let $zw(L)$ dentote the set of partial functions: 
$$\gamma\in zw(L)\wtw\exists_{\tau\in L}~\tau\backslash rootL=\gamma$$

\begin{df}The matrix of subtree $L$ is a tree $wL$ obtained from $zw(L)$ by translating the domain: \end{df}

$$wL=\{\gamma(i+|dom(rootL)|)\in\omega*~|~\gamma\in zw(L)\}$$

\begin{twr}\label{twr:fraktal} For space $(X,d)$ and tree $T(X,d)$ the matrices of complete subtrees, whose roots have the same power and differ only on one coordinate, create ascending sequence of sets indexed by this coordinate.\end{twr}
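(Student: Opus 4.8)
The plan is to unwind all three definitions --- root, complete subtree and matrix --- so that the statement collapses to the monotonicity of the dimension tree $T(X,d)$. First I would fix the common length $m$ of the roots and let $\rho$ be one of them, so $|dom(\rho)|=m$. By the definition of the complete subtree, the subtree $L_{\rho}$ with root $\rho$ consists of every $s\in T(X,d)$ comparable with $\rho$ under inclusion; since $T(X,d)$ is prefix-closed, the prefixes of $\rho$ form the stem and all branching occurs above $\rho$, so indeed $rootL_{\rho}=\rho$. Unwinding $zw(L_{\rho})$ and the domain-translation defining the matrix $wL_{\rho}$, a tail $t\in\omega*$ lies in $wL_{\rho}$ exactly when the concatenation $\rho^{\wedge}t$ (reindexing $t$ to start at position $m+1$) belongs to $T(X,d)$. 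This yields the clean description
$$wL_{\rho}=\{t\in\omega*~|~\rho^{\wedge}t\in T(X,d)\}.$$

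Second, I would isolate the monotone structure of $T(X,d)$ announced just before the statement, in the form: if $s,s'\in\omega*$ have the same domain and $s(i)\leq s'(i)$ for every $i$, then $s\in T(X,d)\Rightarrow s'\in T(X,d)$. This is immediate from the definition of $T(X,d)$. Any uniformly bounded cover witnessing $s'\notin T(X,d)$ is built from families $\mc{V}_{i}$ with $\mc{V}_{i}$ being $s'(i)$-disjoint; as $s'(i)\geq s(i)$, each such $\mc{V}_{i}$ is \emph{a fortiori} $s(i)$-disjoint, so the very same cover witnesses $s\notin T(X,d)$. Contraposing gives the claim: demanding more disjointness can only make covering harder, hence enlarging any coordinate keeps a sequence inside the tree.

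Third, I would combine the two. Let $\rho,\rho'$ be roots of the same power $m$ agreeing on every coordinate except the $k$-th, with $\rho(k)\leq\rho'(k)$. For any tail $t$ the concatenations $\rho^{\wedge}t$ and $(\rho')^{\wedge}t$ share the same domain and differ only at coordinate $k$, where $\rho(k)\leq\rho'(k)$, so $(\rho^{\wedge}t)(i)\leq((\rho')^{\wedge}t)(i)$ for all $i$. By the monotonicity lemma, $\rho^{\wedge}t\in T(X,d)$ forces $(\rho')^{\wedge}t\in T(X,d)$, which by the first step reads $t\in wL_{\rho}\Rightarrow t\in wL_{\rho'}$. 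Thus $wL_{\rho}\subseteq wL_{\rho'}$, and letting the $k$-th coordinate range over $\omega$ exhibits the matrices as an ascending sequence of sets indexed by that coordinate, as claimed.

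I expect the only delicate point to be the bookkeeping in the first paragraph: checking that $rootL_{\rho}=\rho$ so that the translation in the definition of $wL_{\rho}$ strips off exactly the first $m$ coordinates, and that the passage from $zw(L_{\rho})$ to $wL_{\rho}$ lines up the tails of $\rho^{\wedge}t$ correctly (including the empty tail coming from $\rho$ itself). Once the identification $wL_{\rho}=\{t~|~\rho^{\wedge}t\in T(X,d)\}$ is secured, the mathematical content is just the one-line monotonicity observation and no metric estimates remain.
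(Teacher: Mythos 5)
Your proposal is correct and takes essentially the same route as the paper: the paper's one-line proof rests on exactly your monotonicity lemma (an $R$-disjoint family is $r$-disjoint for $R\geq r$, so the absence of an $r$-disjoint cover implies \emph{a fortiori} the absence of an $R$-disjoint cover), declaring the thesis clear from that fact alone. Your first and third paragraphs simply supply the definitional bookkeeping the paper leaves implicit, namely the identification $wL_{\rho}=\{t\in\omega*~|~\rho^{\wedge}t\in T(X,d)\}$ and the coordinatewise comparison of $\rho^{\wedge}t$ with $(\rho')^{\wedge}t$.
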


\begin{proof}
It stems from the fact that R-disjoint family is always r-disjoint for $R\geq r$. For example, if there is no r-disjoint cover then \emph{a fortiori} there is no R-disjoint cover. Now our thesis is clear.
\end{proof}

\begin{df}\emph{Moment of stabilization} denotes the round in dimension game when there exists natural number k such that player A can respond with cover of power k no matter what player B does in this round.\end{df}

\begin{uw}By analysis of dimension tree one can easily see that moment of stabilization heralds the end of dimension game, since player B will not be able to force player A to raise the power of covers he can respond with (the opposite assumption leads to contradiction with theorem~\ref{twr:fraktal}).\end{uw}

\chapter{Various results}
\section{Permamence of trasdim}

$X$ and $Y$ are metric spaces. Function $f:X\rightarrow Y$ is called \emph{coarse embbeding} if there are non decreasing, unbounded functions $p_{1},p_{2}:\m{R}_+\rightarrow\m{R}_+$ such that for every pair $x,x'\in X$ we have:
$$p_1(d_X(x,x'))\leq d_Y(f(x),f(x'))\leq p_2(d_X(x,x'))$$
If additionally there exists N such that $Y\subset D_N(f[X])$ (where $D_N$ is a generalised ball of radius N) then $f$ is called \emph{coarse equivalence} and spaces$ X$ and $Y$ are said to be \emph{coarse equivalent}. Coarse equivalence is an equivalence relation.

\begin{twr}If $X$ and $Y$ are coarse equivalent, then $trasdimX=trasdimY$.\end{twr}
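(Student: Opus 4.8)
The plan is to invoke Theorem~\ref{twr:twl}: a $t$-embedding of trees forces an inequality of their ranks. Since coarse equivalence is an equivalence relation (in particular symmetric), I would reduce the statement to a one-sided claim and exploit the symmetry. Concretely, I will show that a single coarse embedding $f\colon X\to Y$ with control functions $p_1,p_2$ already induces a $t$-embedding $\phi\colon T(X,d_X)\to T(Y,d_Y)$; by Theorem~\ref{twr:twl} this gives $\mathrm{rank}\,T(X,d_X)\leq\mathrm{rank}\,T(Y,d_Y)$, i.e. $\mathrm{trasdim}X\leq\mathrm{trasdim}Y$. Applying the very same construction to a coarse equivalence $g\colon Y\to X$, which exists by symmetry, yields the reverse inequality and hence equality. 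Note that for this direction the surjectivity-type condition $Y\subset D_N(f[X])$ is not needed; only the two defining inequalities of a coarse embedding are used.

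The engine of the argument is that $f$ pulls covers of $Y$ back to covers of $X$ with controlled parameters. Given a uniformly bounded, $t$-disjoint family $\mc{V}$ in $Y$, I set $f^{-1}(\mc{V})=\{f^{-1}(V)~|~V\in\mc{V}\}$ (discarding empty members). If $\mathrm{diam}\,V\leq C$, then for $x,x'\in f^{-1}(V)$ we have $p_1(d_X(x,x'))\leq d_Y(f(x),f(x'))\leq C$; as $p_1$ is nondecreasing and unbounded, $d_X(x,x')\leq\sup\{d~|~p_1(d)\leq C\}<\infty$, so $f^{-1}(\mc{V})$ is uniformly bounded. For the disjointness I introduce the pseudo-inverse $q(t)=\sup\{d\geq 0~|~p_2(d)<t\}$. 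Since $\{d~|~p_2(d)<t\}$ is a down-set and $p_2$ is unbounded, one has both $q(t)\to\infty$ as $t\to\infty$ and the one-sided implication $p_2(d)\geq t\Rightarrow d\geq q(t)$. If $\mc{V}$ is $t$-disjoint, then for $x\in f^{-1}(V),\ x'\in f^{-1}(V')$ with $V\neq V'$ we get $p_2(d_X(x,x'))\geq d_Y(f(x),f(x'))\geq t$, hence $d_X(x,x')\geq q(t)$; thus $f^{-1}(\mc{V})$ is $q(t)$-disjoint. Finally, since the cover of $Y$ covers $f[X]$, the pulled-back family covers $X$.

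I then define $\phi$ coordinatewise, letting $\phi(s)(i)$ be the least natural number $t$ with $q(t)\geq s(i)$ (such $t$ exists because $q(t)\to\infty$, and it lies in $\omega$, so $\phi(s)\in\omega^{*}$). As $\phi(s)(i)$ depends only on $s(i)$, we get $|dom(\phi(s))|=|dom(s)|$ and $t\subset s\Rightarrow\phi(t)\subset\phi(s)$, so the length- and order-conditions of a $t$-embedding hold. The essential point, $s\in T(X,d_X)\Rightarrow\phi(s)\in T(Y,d_Y)$, I verify by contraposition: if $\phi(s)\notin T(Y,d_Y)$ there is a uniformly bounded cover of $Y$ by families $\mc{V}_i$, $i\in dom(s)$, with $\mc{V}_i$ being $\phi(s)(i)$-disjoint; pulling back gives a uniformly bounded cover of $X$ by the $f^{-1}(\mc{V}_i)$, each $q(\phi(s)(i))$-disjoint and hence $s(i)$-disjoint (an $R$-disjoint family is $r$-disjoint for $r\leq R$), so $s\notin T(X,d_X)$.

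I expect the main obstacle to be the bookkeeping around the possibly non-strictly-monotone control functions: one cannot literally invert $p_2$, so the argument hinges on choosing the pseudo-inverse $q$ correctly and establishing both $q(t)\to\infty$ and the one-sided implication $p_2(d)\geq t\Rightarrow d\geq q(t)$, while also confirming that the induced parameters $\phi(s)(i)$ can be taken in $\omega$ so that $\phi$ genuinely maps into the tree $T(Y,d_Y)$. Once this is settled, Theorem~\ref{twr:twl} and the symmetry of coarse equivalence close the proof.
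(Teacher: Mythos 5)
Your argument is correct, but it transports covers in the opposite direction from the paper, so the two proofs are genuinely different in mechanism. The paper fixes a coarse embedding $f\colon X\to Y$ with $Y\subset D_N(f[X])$ and pushes covers of $X$ \emph{forward}: the thickened images $D_N(f[V_i])$ are $(p_1(r_i)-2N)$-disjoint, $(p_2(E)+2N)$-bounded, and cover $Y$ precisely because of the co-boundedness hypothesis (Lemma~\ref{lem:major}); by contraposition this yields a t-embedding of $T(Y,d_Y)$ into $T(X,d_X)$, built by induction along the Kleene--Brouwer ordering with an appeal to Theorem~\ref{twr:fraktal} and a separate treatment of infinite branches, giving trasdim$Y\leq$trasdim$X$, and symmetry of coarse equivalence finishes. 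You instead pull covers of $Y$ \emph{back} along $f$, controlling boundedness by $p_1$ and disjointness by the pseudo-inverse $q$ of $p_2$ (your verifications that $q(t)\to\infty$ and that $p_2(d)\geq t\Rightarrow d\geq q(t)$ are sound, and they are exactly the points where a careless ``inversion'' of $p_2$ would fail), obtaining a coordinatewise map $\phi$ with $s\in T(X,d_X)\Rightarrow\phi(s)\in T(Y,d_Y)$, hence trasdim$X\leq$trasdim$Y$ by Theorem~\ref{twr:twl}, and again symmetry finishes. Your route buys two things: first, the one-sided inequality needs only the coarse embedding inequalities, so you actually prove the stronger statement that trasdim is monotone under arbitrary coarse embeddings (generalizing the subspace monotonicity the paper only remarks on); second, because $\phi(s)(i)$ depends only on $s(i)$, the t-embedding conditions are automatic, so you avoid the Kleene--Brouwer induction, the use of Theorem~\ref{twr:fraktal}, and the well-founded/ill-founded case split (Theorem~\ref{twr:twl} already covers ill-founded trees). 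What the paper's push-forward buys is the explicit quantitative Lemma~\ref{lem:major} --- the direct transfinite analogue of the classical argument for coarse invariance of asdim --- whose ``cover of $X$ gives cover of $Y$'' form is the one echoed in the game-theoretic theorem that follows it.
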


\begin{proof}
It suffices to show that trasdim$Y\leq$~trasdimX. Pick a coarse embbeding $f:X\rightarrow Y$ such that $Y\subset B_N(f[X])$ and consider:

\begin{lem}\label{lem:major}If uniformly bounded (say, by number E) families $V_1,V_2,...,V_n$ are repsectively $r_1,r_2,...,r_n$-disjoint and their sum covers $X$ then the families $D_N(f[V_i])$ are repsectively $(p_1(r_i)-2N)$-disjoint, $(p_2(E)+2N)$-bounded and their sum covers $Y$.\end{lem}

The proof is straightforward. Notice that the minium of $((p_1(r_i)-2N))_{i=1}^n$ can be arbitralily large, because $p_1$ goes to infinity. By contraposition we have:

\begin{wn}If sequence $((p_1(r_i)-2N))_{i=1}^m$ belongs to $T(Y,d_Y)$ then the sequence $(r_i)_{i=1}^m$ belongs to $T(X,d_X).$\end{wn}

For every $\sigma\in T(Y,d_Y)$ we can find sequence $(r_i)_{i=1}^{|dom(\sigma)|}$ of lenght $|dom(\sigma)|$ such that \\$(p_1(r_i)-2N)\geq\sigma(i)$ and $((p_1(r_i)-2N))_{i=1}^{|dom(\sigma)|}\in T(Y,d_Y)$ - the inclusion follows form theorem~\ref{twr:fraktal}. Hence, if $T(Y,d_Y)$ has infinite branch then $T(X,d_X)$ has infinite branch, too (proof by induction). Let's assume that $T(Y,d_Y)$ is well founded. Then the ordering $<_{kb}$ on this tree is well ordering and by induction on this ordering we can define t-embbeding of $T(Y,d_Y)$ into $T(X,d_X)$ in the way presented in the beginnig of this paragraph. Now we use theorem~\ref{twr:twl} to conclude the proof.
\end{proof}

\begin{twr}Consider two dimension games for spaces X and Y, played simultaneously, $G_X$ and $G_Y$ repsectively. We don't consider rounds in which game $G_Y$ is finished. If there exists function which, depending on present round and earlier choices of player $B_Y$, maps every possible choice of player $B_Y$ into a choice of player $B_X$ in such a way that player $B_X$ cannot lose unless player $B_Y$ loses - independently of the strategy picked by $B_Y$ - then trasdimY$\leq$trasdimX.\end{twr}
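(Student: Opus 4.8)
The plan is to reduce the statement to Theorem~\ref{twr:twl} by turning the hypothesised reduction between the two games into a t-embbeding of the game tree $G(Y,d_Y)$ into $G(X,d_X)$. Recall that trasdim$X=$rank$G(X,d_X)$ and that $G(X,d_X)=T(X,d_X)^{\rightarrow}$; consequently a nondecreasing sequence $s$ lies in $G(X,d_X)$ exactly when there is no uniformly bounded $s$-cover of $X$, that is, when player $A$ cannot end $G_X$ after $B_X$ has played the radii recorded in $s$. Since $T(X,d_X)^{\rightarrow}$ is closed under prefixes, this is the same as saying that the $X$-game has not ended at any round along $s$, i.e. that $B_X$ has not yet lost along the partial play $s$; the identical reading applies to $Y$. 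This dictionary between ``not having lost'' and ``belonging to the game tree'' is what lets me pass from the dynamic hypothesis to a static statement about trees.

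First I would record the structural properties of the reduction. By assumption the translating function produces $B_X$'s $n$-th radius from the present round $n$ together with the radii chosen by $B_Y$ in rounds up to $n$; since $B_X$ must obey the rules of $G_X$, its output is a legal, hence nondecreasing, play. Reading this function on whole partial plays, it sends each nondecreasing finite sequence $s$ (a partial play of $B_Y$) to a nondecreasing sequence $\Phi(s)$ of the same length, and because $\Phi(s)(n)$ depends only on the first $n$ entries of $s$, the map is causal: $t\subset s$ implies $\Phi(t)\subset\Phi(s)$. Thus $\Phi$ is automatically length-preserving and prefix-monotone, which are precisely the two clauses in the definition of a t-embbeding; the only point still to verify is that $\Phi$ carries $G(Y,d_Y)$ into $G(X,d_X)$.

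Next I would translate the winning hypothesis. Read contrapositively, the clause ``$B_X$ cannot lose unless $B_Y$ loses'' says that whenever $B_Y$ has not lost along $s$ then $B_X$ has not lost along $\Phi(s)$; by the dictionary of the first paragraph this is exactly $s\in G(Y,d_Y)\Rightarrow\Phi(s)\in G(X,d_X)$. Hence $\Phi$ restricts to a map $G(Y,d_Y)\rightarrow G(X,d_X)$, and by the second paragraph this restriction is a t-embbeding. Applying Theorem~\ref{twr:twl} with source $G(Y,d_Y)$ and target $G(X,d_X)$ gives rank$G(Y,d_Y)\leq$rank$G(X,d_X)$, that is trasdim$Y\leq$trasdim$X$. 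Because Theorem~\ref{twr:twl} is stated for arbitrary trees, the case trasdim$Y=\infty$ is handled in the same step: an infinite branch of $G(Y,d_Y)$ is carried by $\Phi$ to an infinite branch of $G(X,d_X)$, forcing trasdim$X=\infty$ as well.

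The main obstacle I anticipate is bookkeeping rather than conceptual: making the informal game hypothesis precise enough that the implication $s\in G(Y,d_Y)\Rightarrow\Phi(s)\in G(X,d_X)$ is literally correct, and in particular confirming that the radii $B_X$ is instructed to play form a genuinely nondecreasing sequence, so that $\Phi(s)$ really belongs to $T(X,d_X)^{\rightarrow}$ and not merely to $T(X,d_X)$. Should the reduction as given ever output a non-monotone play, I would repair this exactly as in the proof of the coarse-equivalence theorem above, replacing $\Phi(s)$ by a dominating nondecreasing sequence and invoking Theorem~\ref{twr:fraktal} to remain inside the tree. The clause ``we do not consider rounds in which game $G_Y$ is finished'' I would read as the statement that $\Phi$ need only be defined on the nodes of $G(Y,d_Y)$, which is all that the t-embbeding argument uses.
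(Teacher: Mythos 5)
Your proposal is correct and follows essentially the same route as the paper: the paper's (very terse) proof likewise passes to the game trees $G(Y,d_Y)$ and $G(X,d_X)$, turns the hypothesised translating function into a t-embbeding between them, and concludes by Theorem~\ref{twr:twl}. Your write-up simply makes explicit the details the paper leaves implicit (the causality of the translating function giving prefix-monotonicity, and the contrapositive reading of ``$B_X$ cannot lose unless $B_Y$ loses'' as membership preservation between the trees).
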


\begin{proof}
Indeed, consider trees $TG(X,d_X)$ and $TG(Y,d_Y)$. Our conditions allow us to define t-embbeding in similar way as in the proof of the former theorem.
\end{proof}

Notice that coarse equivalence is a special case in which the function mentioned in the theorem above does not depend on the round and earlier choices of player $B_Y$ whatsoever.

We conclude this section with simple theorem.

\begin{twr}\label{twr:sksum}Finite sum of spaces having $trasdim<\infty$ also has $trasdim<\infty$.\end{twr}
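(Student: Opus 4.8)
The plan is to reduce the statement to the characterization, established earlier in the paper, that $\mathrm{trasdim}\,X<\infty$ is equivalent to $X$ enjoying asymptotic property $C$, and then to check directly that property $C$ is preserved under finite unions. Writing the finite sum as $X=X_1\cup\cdots\cup X_k$, where each $X_j$ carries the metric induced from $X$, I would fix an arbitrary strictly increasing sequence $n_1<n_2<\cdots$ of natural numbers and aim to build a single uniformly bounded cover of $X$ whose $i$-th family is $n_i$-disjoint.

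The key observation is that, in contrast with the sharp union theorem for $\mathrm{asdim}$ (where one is forced to keep the number of families equal to $\max_j\mathrm{asdim}\,X_j+1$), here only finiteness of the combined cover is required, so the number of families may be freely multiplied. Concretely, for each residue $1\le j\le k$ I would feed the subsequence $n_j<n_{j+k}<n_{j+2k}<\cdots$ into property $C$ for $X_j$. This produces, for each $j$, a number $m_j$ and a uniformly bounded cover of $X_j$ by families $\mc{V}^{(j)}_1,\ldots,\mc{V}^{(j)}_{m_j}$ such that $\mc{V}^{(j)}_t$ is $n_{j+(t-1)k}$-disjoint.

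Next I would interleave these families along the original indexing: the family $\mc{V}^{(j)}_t$ is placed at level $j+(t-1)k$ of the combined cover, and any level receiving no family is filled with the empty family, which is trivially disjoint at every scale. Because each $X_j$ carries the induced metric, every $\mc{V}^{(j)}_t$ stays $n_{j+(t-1)k}$-disjoint and uniformly bounded when viewed as a family of subsets of $X$, the common diameter bound being the maximum of the finitely many individual bounds. The union of all these families covers $X$, since the families coming from $X_j$ already cover $X_j$ and the chosen levels exhaust an initial segment of $1,2,\ldots$. Hence $X$ has property $C$, and applying the equivalence in the reverse direction yields $\mathrm{trasdim}\,X<\infty$.

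The only point demanding care is the bookkeeping of the level assignment: verifying that $(j,t)\mapsto j+(t-1)k$ is a bijection of $\{1,\ldots,k\}\times\{1,2,\ldots\}$ onto $\m{N}$, so that each level is used at most once and the $n_i$-disjointness demanded at level $i$ is exactly the disjointness supplied by the family deposited there. I expect no genuine obstacle beyond this indexing, precisely because property $C$ tolerates an arbitrary finite number of families; the delicate boundary-matching argument needed for the optimal $\mathrm{asdim}$ union theorem is thereby avoided entirely.
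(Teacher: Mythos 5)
Your proof is correct, but it travels through a different characterization than the paper's own argument, which is a one-liner based on the game-theoretic definition of Section 1.3: since $\mathrm{trasdim}\,X<\infty$ is equivalent to player $A$ having a winning strategy in the dimension game, the finitely many games for $X_1,\ldots,X_k$ can simply be played one after another, and the concatenation of $A$'s winning strategies is a winning strategy for the game on the union. You instead invoke the equivalence of $\mathrm{trasdim}\,X<\infty$ with asymptotic property C and split the given strictly increasing sequence into the $k$ residue-class subsequences $n_j<n_{j+k}<n_{j+2k}<\cdots$, interleaving the resulting covers of the $X_j$. Both arguments exploit exactly the same essential freedom, namely that the number of families in the cover may be multiplied by $k$ (in the game: that $k_n$ may keep growing for a while), which is what distinguishes this statement from the sharp finite union theorem for $\mathrm{asdim}$; so they are the same in spirit. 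The formal difference is instructive: the paper's sequential concatenation is forced by the adaptive nature of the game (player $B$'s future demands are not known in advance), whereas your residue-class splitting is available precisely because in property C the entire sequence of demands is given upfront. Your version is more explicit and self-contained, at the cost of leaning on the property C equivalence, which the paper asserts (Sections 1.1 and 1.4) but leaves to the reader. One small wording slip: the levels $j+(t-1)k$ actually used need not exhaust an initial segment of $\m{N}$ (the $m_j$ can differ), so the combined cover is indexed by $1,\ldots,\max_j\bigl(j+(m_j-1)k\bigr)$ with the gaps filled by empty families exactly as you prescribe; the covering property then comes from the union of the subcovers, not from the used levels forming an initial segment.
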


\begin{proof}
Indeed, finite quantity of dimension games played one after another, in every one of which player A has winning strategy, can be considered as dimension game and player A has a winnig strategy in it.\end{proof}

One can think under what conditions this holds for infinite number of spaces. We provide one possible answer in section~\ref{sec:sumy}.

\section{Dimension of a metric family}\label{sec:wymrodziny}

We say, that family of metric spaces $\Gamma$ (in short - metric family) has dimension Trasdim$\Gamma=\alpha$ if the tree which is set-theoretical sum of dimension trees of spaces belonging to $\Gamma$, that is $L(\Gamma)=\bigcup_{X\in\Gamma}T^{\rightarrow}(X,d_X)$, has countable rank rankL$<\infty$ and the supertree (a superset that is also a tree) of $L(\Gamma)$ consisting of nondecreasing finite sequences $\tau$, for which there is no uniformly bounded familiy of $\tau$-covers for all spaces in $\Gamma$\footnote{that is such family that there is D$\in\m{N}$ - uniform bound for uniformly bounded covers of spaces belonging to $\Gamma$.}, denoted as $T^{\rightarrow}(\Gamma)$, has rank rank$T^{\rightarrow}(\Gamma)=\alpha$\footnote{The reader can easily check that this set indeed is a tree - it follows from the fact that the empty family is r-disjoint and d-bunded for every pair of numbers r and d.}. If the tree $T^\rightarrow(\Gamma)$ is not well founded, then we define Trasdim$\Gamma:=\infty$.

\begin{uw}\label{uw:ogrGamma}Notice that every space belonging to $\Gamma$ can have same dimension tree (for sure such tree t-embbeds into $L(\Gamma)$, so we can assume that all of them are equal to $L(\Gamma)$) but still, with fixed sequence $\tau$, there may be problem with uniform bound for $\tau$-covers of these spaces. As it will turn out, there exist families for which rank~$L(\Gamma)=0$ but rank~$T^{\rightarrow}(\Gamma)$ can have any fixed value.\end{uw}

\emph{Asymptotic sum} as~$\bigcup_{i=1}^{\infty}X_i$ of sequence of metric spaces $(X_i)_{i=1}^{\infty}$ denotes a metric space, which is disjoint sum of $X_i$ such that there exists strictly increasing sequence of natural numbers $(x_i)_{i=1}^{\infty}$ so that for $i<j$ we have $d(X_i,X_j)=x_i+x_{i+1}+...+x_{j-1}+x_j$.

\begin{stw}Asymptotic sum exists for every sequence $(X_i)_{i=1}^{\infty}$ of metric spaces.\end{stw} 

\begin{proof}
It suffices to pick a base point in each space. For distance between points from different spaces we put the sum of their distances from respective base points + the distance between basepoints, which is given by $(x_i)_{i=1}^{\infty}$. It's easy to check the triangle inequality for various cases. In fact it suffices that basepoints create a metric space - in our case they create subspace of real line.
\end{proof}

\begin{df}Asymptotic sum constructed in the proof of previous theorem shall be called \emph{standard asymptotic sum} and denoted by capital letter (As~$\bigcup\Gamma$).\end{df}

\begin{stw}\label{stw:assum}If $\Gamma$ - metric family, Trasdim$\Gamma=\alpha$ and $\Gamma$-countable, then trasdim(as~$\bigcup\Gamma)<\infty$ and trasdim(As~$\bigcup\Gamma)=\alpha$. Moreover, if every space in $\Gamma$ is bounded, then  also trasdim(as~$\bigcup\Gamma)=\alpha$.\end{stw}

\begin{proof}
Pick minimal with respect to inclusion $\tau$ such that there exist uniform bound on uniformly bounded $\tau$-covers of elements of $\Gamma$. Choose $i\in\m{N}$ such that $x_i>max\{\tau(i)~|~i\in dom(\tau)\}$. With little effort we can now cover $X_j$ for $j\geq i$. The remaining part of the space is a finite sum of metric spaces and we can apply theorem~\ref{twr:sksum}. As to equalities from our theorem, in case of bounded spaces we can cover our finite sum with one set, which can be included into some family in our cover of the rest of the space. More general case with unbounded spaces and As~$\bigcup\Gamma$ can be easily dealt with by considering the finite sum of balls $B(b_k,x_i)$ of radious $x_i$ each centered in base points $x_k$ of $X_k$ respectively for $1\leq k\leq i$. Details are left to the reader.
\end{proof}

We'll find the notion of asymptotic sum usefull in constructing examples of spaces of given dimension. As for now however, we prove important theorem.

\begin{twr}\label{twr:wlokna}\emph{Weak Fibering theorem}. Let $f:X\rightarrow Y$ satisfy condition $d_Y(f(x),f(x'))\leq p_2(d_X(x,x'))$ for $p_2$ nondecreasing and unbounded~\footnote{such f is called \emph{uniformly expansive}.}. Suppose that trasdimY$<\infty$ and that family $\Gamma$ of preimages of bounded sets has countable Trasdim. Then trasdim$X<\infty$.\end{twr}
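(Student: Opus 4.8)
The plan is to mimic the classical Bell--Dranishnikov fibering argument, but to feed its two halves into the finite‑sum theorem~\ref{twr:sksum} and the asymptotic‑sum theorem~\ref{stw:assum} rather than into a dimension count. First I would record the only consequence of uniform expansiveness that I need: setting $q(R):=\sup\{t\ge 0\mid p_2(t)<R\}$ gives a nondecreasing function with $q(R)\to\infty$ (because $p_2$ is unbounded), and for any $U,U'\subset Y$ with $d_Y(U,U')\ge R$ the preimages satisfy $d_X(f^{-1}(U),f^{-1}(U'))\ge q(R)$. Thus an $R$-disjoint family in $Y$ pulls back to a $q(R)$-disjoint family in $X$, and the separation of the pullback can be made to exceed any prescribed scale by taking $R$ large.

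Next I would use $\mathrm{trasdim}\,Y<\infty$. Given the scales $r_1\le r_2\le\cdots$ demanded by player $B$ in the dimension game on $X$, I let player $A$ run, in the background, the dimension game on $Y$ at \emph{inflated} scales $R_1<R_2<\cdots$ chosen so large that $q(R_i)$ exceeds every scale that will ever be demanded at the rounds in question. Since $T^{\rightarrow}(Y,d_Y)$ is well founded this auxiliary game terminates after some finite number $N_1$ of rounds, producing a uniformly ($B$-)bounded cover $\mathcal V_1,\dots,\mathcal V_{N_1}$ of $Y$ whose families are $R_i$-disjoint. Pulling back, $A_i:=f^{-1}(\bigcup\mathcal V_i)$ is, up to the coarse separation $q(R_i)$, the disjoint union of the preimages $f^{-1}(V)$, $V\in\mathcal V_i$; each such preimage is the preimage of a $B$-bounded set and hence a member of $\Gamma$, and the $A_i$ together cover $X$ because $\bigcup_i\mathcal V_i$ covers $Y$.

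Each $A_i$ is therefore (coarsely) a standard asymptotic sum of a subfamily of $\Gamma$: its pieces are members of $\Gamma$ separated by the fixed gap $q(R_i)$, which dominates the finite list of demanded scales, so they behave exactly as the $x_i$-separated summands of Assertion~\ref{stw:assum}. Since $\mathrm{Trasdim}\,\Gamma$ is a countable ordinal, that assertion gives $\mathrm{trasdim}\,A_i<\infty$, and moreover a \emph{uniform} bound: the family game governing $\Gamma$ terminates after a number of rounds and with a uniform diameter bound depending only on the demanded scales, not on $i$ or on the individual pieces (this uniformity is exactly the content of working with $T^{\rightarrow}(\Gamma)$ and Note~\ref{uw:ogrGamma}). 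Thus player $A$ can cover every $A_i$ by families realising the demanded disjointnesses $r_1,r_2,\dots$, with a common bound.

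Finally I would assemble the global cover of $X=\bigcup_{i=1}^{N_1}A_i$ from the finitely many column covers as in Theorem~\ref{twr:sksum}: the $N_1$ games on the $A_i$ are played one after another inside a single game on $X$, and because $N_1$ is finite and each $A_i$ admits a winning strategy this yields a winning strategy on $X$, i.e.\ $\mathrm{trasdim}\,X<\infty$. The two places where I expect to have to work are the \emph{scale coordination} and the \emph{overlap} of the columns. For the former, I must choose the inflated base scales $R_i$ before knowing how many fiber families the $\Gamma$-game will use, so I would fix the (finite) length of the $\Gamma$-play first, then pick each $R_i$ with $q(R_i)$ above the largest scale that play can reach; monotonicity of the dimension tree (Theorem~\ref{twr:fraktal}) guarantees that raising $R_i$ never destroys the base cover. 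The genuinely delicate point is that the $A_i$ \emph{overlap} in $X$ (the $\mathcal V_i$ need not be disjoint in $Y$), so Theorem~\ref{twr:sksum}, stated for disjoint sums, must be upgraded to a finite-\emph{union} statement; I would obtain this by the same sequential-game argument, checking only that taking unions of the column families keeps them uniformly bounded and preserves the required disjointness -- this last check, resting again on $q(R_i)$ exceeding the current demand, is where the uniform-expansiveness hypothesis does its real work.
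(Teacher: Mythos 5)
Your overall architecture is the same as the paper's --- pull back an inflated-disjointness cover of $Y$, cover the resulting $\Gamma$-pieces uniformly, and combine finitely many block covers --- but the step you rest it on is not valid as stated: the claim that each $A_i=f^{-1}(\bigcup\mathcal{V}_i)$ has $\mathrm{trasdim}\,A_i<\infty$ ``by Assertion~\ref{stw:assum}''. The set $A_i$ is not an asymptotic sum: the definition requires the separations between pieces to be partial sums of a strictly increasing sequence, hence to tend to infinity, whereas your pieces have only the \emph{fixed} mutual separation $q(R_i)$, and infinitely many of them may lie within bounded distance of one another. Worse, the abstract statement you are invoking --- ``the union of a pairwise $q$-separated subfamily of a family with countable $\mathrm{Trasdim}$ has finite $\mathrm{trasdim}$'' --- is false for every fixed $q$: let $\Gamma$ be the family of all singletons of $q\varoplus_{i<\omega}\m{Z}_i$; then every nonempty nondecreasing $\tau$ admits uniformly $0$-bounded $\tau$-covers of all elements of $\Gamma$ (one singleton family plus empty families), so $T^{\rightarrow}(\Gamma)=\{\emptyset\}$ and $\mathrm{Trasdim}\,\Gamma=0$, the singletons are pairwise $q$-separated, yet their union is the whole space, of $\mathrm{trasdim}=\infty$. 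A fixed separation, however large, controls only the scales it dominates, never \emph{all} future scales; so no winning strategy on the individual $A_i$ is available, and the finite-sum assembly via Theorem~\ref{twr:sksum} has nothing to combine.

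What is true, what you actually need, and what the paper proves, is the coordinated version of your ``delicate point'': first split the demanded sequence $r$ into consecutive finite blocks $\tau_1,\tau_2,\dots$ (infinitely many, prepared in advance, one per potential $Y$-round), each chosen by well-foundedness of $T^{\rightarrow}(\Gamma)$ to admit uniformly bounded $\tau_j$-covers of all elements of $\Gamma$; then inflate $S_j>p_2(\max\tau_j)$ and apply well-foundedness of $T^{\rightarrow}(Y,d_Y)$ to the sequence $(S_j)_{j=1}^{\infty}$ to obtain a finite prefix $\Phi$ admitting a $\Phi$-cover of $Y$. The $j$-th pulled-back family is then $\max\tau_j$-disjoint, which is exactly enough to merge the uniform $\tau_j$-covers of its pieces into a single uniformly bounded $\tau_j$-cover of $A_j$; no claim about $\mathrm{trasdim}\,A_j$ itself is made or needed. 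Concatenating these block covers for $j\leq|\mathrm{dom}(\Phi)|$ exhibits a finite prefix of $r$ outside $T^{\rightarrow}(X,d_X)$, and since $r$ was arbitrary, $\mathrm{trasdim}\,X<\infty$. Your closing paragraph gestures at this ordering (``fix the $\Gamma$-play first, then pick $R_i$''), but carried out honestly it replaces, rather than supports, the route through Assertion~\ref{stw:assum} and Theorem~\ref{twr:sksum}: once the blocks and the inflation are coordinated, both citations drop out of the argument entirely.
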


\begin{proof}
Fix any nondecreasing sequence of natural numbers $r=(r_i)_{i=1}^{\infty}$ and take minimal with respect to inclusion finite prefix $\tau_1\subset r$ such that $\tau_1\notin T^{\rightarrow}(\Gamma)$. Let $R_1=max\{\tau_1(i)~|~i\in\m{N}\}$. Supposing that we have already defined number $R_{j-1}$ and finite sequence $\tau_{j-1}$, the sequence $\tau_{j}$ and the number $R_j$ are defined in similar way as in case $j=1$ by considering sequence $r\backslash\bigcup_{k=1}^{j-1}\tau_k$ instead of r - it's a little abuse in notation, because formally we should translate the domains, which could result in loosing the idea from sight - it's all about dividing sequence r into subsequent finite parts (finite sequences) $\tau_j$, each not belonging to $T^{\rightarrow}(\Gamma)$, and taking maximum $R_j$ of each one of them. For sequence $(R_j)_{j=1}^{\infty}$ consider sequence $(S_j)_{j=1}^{\infty}$ such that $\forall_{j\in\m{N}}S_j>p_2(R_j)$ and take minimal with respect to inclusion finite prefix $\Phi\subset (S_j)_{j=1}^{\infty}$ such that $\Phi\notin T^{\rightarrow}(Y,d_Y)$. Take $\Phi$-cover of Y and then consider preimage of this cover. Preimage of $S_j$-disjoint family is $R_j$-disjoint family consisting of elements of $\Gamma$ and so for every element of this $R_j$-disjoint family we can find its $\tau_j$-cover; $R_j$-disjointness assures that we can cover these elements simultaneously and so every such $R_j$-disjoint family has (uniformly bounded) $\tau_j$-cover. Now it's clear that there exists finite prefix of r which doesn't belong to $T^\rightarrow(X,d_X)$.
\end{proof}

Notice, that the proof implies that we can weaken conditions of our theorem. 

\begin{twr}\emph{Fibering Theorem.} Let $f:X\rightarrow Y$ be uniformly expansive function. Let $\Gamma$ be the family of preimages of bounded sets and $\Gamma_D$ a family of preimages of D-bounded sets for $D<\infty$. Suppose trasdimY$<\infty$ and the set-theoretical sum of trees $T^\rightarrow(\Gamma_D)$, literally $L:=\bigcup_{D\in\m{N}}T^\rightarrow(\Gamma_D)$, is a tree with countable rank. Then\\ trasdim$X<\infty$.\end{twr}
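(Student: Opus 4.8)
The plan is to run the argument of the Weak Fibering theorem~\ref{twr:wlokna} almost verbatim, with the single tree $T^{\rightarrow}(\Gamma)$ replaced by $L=\bigcup_{D\in\m{N}}T^{\rightarrow}(\Gamma_D)$ and with one observation inserted that disposes of the only genuine difficulty. First I would record the elementary set-theoretic equivalence
$$\tau\notin L\wtw\forall_{D\in\m{N}}~\tau\notin T^{\rightarrow}(\Gamma_D),$$
together with the monotonicity $T^{\rightarrow}(\Gamma_D)\subseteq T^{\rightarrow}(\Gamma_{D'})$ for $D\leq D'$ (a uniform bound for the $\tau$-covers of the larger family $\Gamma_{D'}\supseteq\Gamma_D$ also bounds those of the smaller one), so that $L$ is an increasing union of trees. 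The point of the equivalence is that if a nondecreasing sequence $\tau$ leaves $L$, then \emph{for every} bound $D$ the family $\Gamma_D$ of preimages of $D$-bounded sets admits a $\tau$-cover uniformly bounded over all of $\Gamma_D$; this is exactly what lets me cover fibres without knowing in advance how large the target cover will be.

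With this in hand the construction copies Theorem~\ref{twr:wlokna}. Fix an arbitrary nondecreasing sequence $r=(r_i)_{i=1}^{\infty}$ of natural numbers. Since $L$ is well founded, I would cut $r$ into consecutive finite blocks $\tau_1,\tau_2,\dots$, taking $\tau_j$ (after the same harmless translation of domains as before) to be a minimal prefix of the current tail of $r$ with $\tau_j\notin L$, and put $R_j=\max\{\tau_j(i)~|~i\in dom(\tau_j)\}$; because $r$ is nondecreasing the sequence $(R_j)_{j=1}^{\infty}$ is nondecreasing as well. Then I choose $(S_j)_{j=1}^{\infty}$ with $S_j>p_2(R_j)$ and, using trasdim$Y<\infty$, i.e.\ well-foundedness of $T^{\rightarrow}(Y,d_Y)$, take a minimal prefix $\Phi\subset(S_j)$ with $\Phi\notin T^{\rightarrow}(Y,d_Y)$. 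This yields a $\Phi$-cover of $Y$, uniformly bounded by some $D\in\m{N}$.

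Now I would pull this cover back through $f$. Each member of the $\Phi$-cover is a $D$-bounded subset of $Y$, so its preimage lies in $\Gamma_D$; and since $d_Y(f(x),f(x'))\leq p_2(d_X(x,x'))$ with $S_j>p_2(R_j)$, the preimage of an $S_j$-disjoint family is $R_j$-disjoint (if two preimages were closer than $R_j$ in $X$ their images would be closer than $S_j$ in $Y$). Since $\tau_j\notin L$ and $T^{\rightarrow}(\Gamma_D)\subseteq L$, we have $\tau_j\notin T^{\rightarrow}(\Gamma_D)$, so every element of $\Gamma_D$ — in particular each of these preimages — carries a $\tau_j$-cover with a bound uniform across $\Gamma_D$; the $R_j$-disjointness then lets me merge these covers over the disjoint family into a single $\tau_j$-cover of their union (the coordinatewise unions stay $\tau_j(i)$-disjoint because $\tau_j(i)\leq R_j$). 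Concatenating over the finitely many blocks used by $\Phi$ and taking the maximum of the finitely many bounds gives a uniformly bounded cover of $X$ realising a finite prefix of $r$, so that prefix does not belong to $T^{\rightarrow}(X,d_X)$. As $r$ was arbitrary, $T^{\rightarrow}(X,d_X)$ has no infinite branch, hence is well founded, and trasdim$X<\infty$.

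The only real obstacle is the circular dependence between $D$ and the blocks $\tau_j$: the bound $D$ is delivered only by the cover of $Y$, yet the $\tau_j$ have to be fixed beforehand. In Theorem~\ref{twr:wlokna} this was sidestepped by the strong hypothesis that one $D$ works for the whole of $\Gamma$; here passing to $L$ breaks the circle, because leaving $L$ secures a uniform $\tau_j$-cover of $\Gamma_D$ \emph{simultaneously for every} $D$, which makes the particular value extracted from the $\Phi$-cover irrelevant to the choice of the blocks. Everything else — the disjointness bookkeeping, the merging over $R_j$-disjoint families and the finiteness of the bounds — is routine and identical to the weak version, and no computation of ranks is needed, since only well-foundedness of the trees, not the values of their ranks, enters the conclusion trasdim$X<\infty$.
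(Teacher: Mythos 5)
Your proof is correct and takes essentially the same approach as the paper: the paper's own proof is exactly the instruction to repeat the Weak Fibering argument with $L$ in place of $T^{\rightarrow}(\Gamma)$, ``applying the fact that the $\Phi$-cover of $Y$ is $D$-bounded for some $D<\infty$.'' The observation you single out as the key point --- that $\tau_j\notin L$ secures uniformly bounded $\tau_j$-covers of $\Gamma_D$ simultaneously for every $D$, so the value of $D$ extracted from the $\Phi$-cover can be learned after the blocks are fixed --- is precisely the content of that remark, and the rest of your write-up just fills in the details of the weak version faithfully.
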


\begin{proof}
It suffices to repeat the proof of preceding theorem, considering tree $L$ instead of $T^{\rightarrow}(\Gamma)$ and applying the fact that the $\Phi$-cover of $Y$ is $D$-bounded for some $D<\infty$.
\end{proof}

\begin{twr}\label{twr:produkt}\emph{Product theorem.} If X,Y - metric spaces such that trasdim$X<\infty$ and trasdim$Y<\infty$ then trasdim($X\times Y)<\infty$.\end{twr}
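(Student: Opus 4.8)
The plan is to deduce the Product Theorem from the Fibering Theorem, using one of the coordinate projections as the map $f$. This is the natural strategy because the Product Theorem is the last in a chain of results specifically engineered (Weak Fibering, then Fibering) to control dimension of total spaces from dimension of base and fibers. The product $X\times Y$ should be equipped with a metric compatible with the coarse structure, e.g. the $\ell^1$ or sup metric $d((x,y),(x',y'))=d_X(x,x')+d_Y(y,y')$; since trasdim is a coarse invariant (proven earlier via coarse equivalence), the particular choice among the standard product metrics will not matter.

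First I would take $f$ to be the projection $\pi_Y:X\times Y\to Y$, $(x,y)\mapsto y$. This map is $1$-Lipschitz in the $\ell^1$ metric, hence uniformly expansive with $p_2(t)=t$. By hypothesis $\mathrm{trasdim}\,Y<\infty$, so the base-space condition of the Fibering Theorem is satisfied. The fibers of $\pi_Y$ are the slices $X\times\{y\}$, which are isometric to $X$; more generally, the preimage of a $D$-bounded set $B\subset Y$ is contained (coarsely) in $X\times B$, a bounded thickening of a single copy of $X$. So every member of the fiber family $\Gamma_D$ is coarse equivalent to $X$, and since $\mathrm{trasdim}\,X<\infty$ each preimage has finite trasdim, with a common dimension tree $T^\rightarrow(X,d_X)$.

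The crux of the argument—and the step I expect to be the main obstacle—is verifying that the set-theoretical sum $L=\bigcup_{D\in\m{N}}T^\rightarrow(\Gamma_D)$ is a well-founded tree of countable rank, as the Fibering Theorem requires. It is not enough that each individual preimage has finite trasdim; one needs a \emph{uniform} bound on the $\tau$-covers of the whole family $\Gamma_D$ for each fixed $D$, i.e. a single diameter bound $D'\in\m{N}$ working for all members simultaneously. Here I would exploit that all members of $\Gamma_D$ are coarse equivalent to $X$ \emph{with uniformly controlled coarse-equivalence data}: the preimage $\pi_Y^{-1}(B)$ sits inside $X\times B$, and the projection $\pi_X$ restricted to it is a coarse equivalence onto $X$ whose control functions $p_1,p_2$ and thickening constant $N$ depend only on $D=\mathrm{diam}\,B$, not on the location of $B$. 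Pushing a fixed uniformly bounded $\tau$-cover of $X$ through these uniformly controlled maps (invoking Lemma~\ref{lem:major}) yields $\tau$-covers of every member of $\Gamma_D$ with a common bound. This shows $T^\rightarrow(\Gamma_D)=T^\rightarrow(X,d_X)$ up to the harmless discrepancy in leaves, so $L=T^\rightarrow(X,d_X)$ has countable rank $\mathrm{trasdim}\,X$.

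Having checked both hypotheses—$\mathrm{trasdim}\,Y<\infty$ and $L$ a countable-rank tree—I would conclude directly by the Fibering Theorem that $\mathrm{trasdim}(X\times Y)<\infty$. The only genuinely delicate point throughout is the uniformity of the fiber estimates across the family $\Gamma_D$; once that is nailed down, everything else is an application of machinery already established in the excerpt.
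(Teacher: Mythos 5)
Your overall strategy is exactly the paper's: its entire proof consists of the remark that the projection is uniformly expansive and satisfies the hypotheses of the Fibering Theorem, and you correctly identify that the real content is verifying that $L=\bigcup_{D\in\m{N}}T^{\rightarrow}(\Gamma_D)$ has countable rank. However, the mechanism you invoke for that verification fails as stated. Lemma~\ref{lem:major} pushes a cover forward along a coarse embedding and thickens it by $N$, at the cost of $2N$ in disjointness: applied to the section $x\mapsto(x,b_0)$ of a fiber $X\times B$ (so $N=D=\mathrm{diam}\,B$, $p_1=p_2=\mathrm{id}$), a $\tau$-cover of $X$ yields only a $(\tau(i)-2D)$-disjoint cover of $X\times B$, not a $\tau$-cover. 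Repairing this by starting from a boosted cover of $X$ proves only the inclusion $T^{\rightarrow}(\Gamma_D)\subset\{s~|~(s(i)+2D)_i\in T^{\rightarrow}(X,d_X)\}$, and this is genuinely too weak: the union over $D$ of these shifted trees can be ill-founded even when $T^{\rightarrow}(X,d_X)$ is well founded. Indeed, if asdim$X=\infty$ (which is compatible with trasdim$X<\infty$, e.g. $X=\bigcup_{k=1}^{\infty}k\m{Z}^k$), then for every $n$ there are arbitrarily large $r$ admitting no uniformly bounded cover of $X$ by $n$ $r$-disjoint families; since $R$-disjoint implies $r$-disjoint for $R\geq r$, every finite prefix of the constant sequence $(2,2,2,\dots)$ then lies in some shifted tree, producing an infinite branch of their union. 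So with Lemma~\ref{lem:major} alone, the hypothesis of the Fibering Theorem is not verified, and this loss of disjointness is precisely the delicate point that the distinction between the Weak Fibering and Fibering Theorems is designed to handle.

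The fix is a one-line change, and it is the construction the paper itself uses inside the proof of the Weak Fibering Theorem: instead of pushing covers of $X$ into the fibers and thickening, pull them back along the $1$-Lipschitz projection $\pi_X:X\times B\rightarrow X$. The preimage of a $\tau(i)$-disjoint, $E$-bounded family $V_i$ is the family $\{V\times B~|~V\in V_i\}$, which is still $\tau(i)$-disjoint (preimages of $r$-separated sets under a $1$-Lipschitz map are $r$-separated) and $(E+D)$-bounded, uniformly over all $B$ with $\mathrm{diam}\,B\leq D$. This gives $T^{\rightarrow}(\Gamma_D)\subset T^{\rightarrow}(X,d_X)$ on the nose for every $D$, hence $L\subset T^{\rightarrow}(X,d_X)$, and monotonicity of rank under inclusion of trees yields $\mathrm{rank}\,L\leq\mathrm{trasdim}\,X<\infty$. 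With that substitution the rest of your argument (the projection is uniformly expansive with $p_2=\mathrm{id}$, trasdim$Y<\infty$, apply the Fibering Theorem) is correct and coincides with the paper's proof.
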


\begin{proof}
Notice that projection is uniformly expansive function which satisfies conditions of previous theorem.
\end{proof}

\section{Examples}\label{sec:przyklady}

Firstly let's give some usefull definitions.

\begin{df}For a metric space X and sequence $(r_i)_{i=1}^n$, $(r_i)_{i=1}^n$-dimension of space X we call the least number minus 1, which player A can respond with in n-th round of dimension game $G_X$, assuming that sequence $(r_i)_{i=1}^n$ is the strategy of player B. In case of sequence of lenght one we use obvious abbreviation.\end{df}

Note that \emph{a priori} a space can have finite $(r_i)_{i=1}^n$-dimension for every finite sequence and still have uncountable trasdim.

Metric space X is called \emph{r-space} if $\forall_{x\neq x'\in X}~d_X(x,x')\geq r$. 
    
\begin{twr}\label{twr:rX}Each metric space X is for every $r>0$ coarsly equivalent with some r-space.\end{twr}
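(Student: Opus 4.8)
The plan is to realise the required $r$-space as a maximal $r$-separated subset of $X$ equipped with the induced metric, and then to check that its inclusion into $X$ is a coarse equivalence. Call a subset $S\subseteq X$ \emph{$r$-separated} if $d_X(s,s')\geq r$ for all distinct $s,s'\in S$; such an $S$, with the subspace metric, is by definition an $r$-space, so it only remains to arrange that $S$ sits coarsely densely inside $X$.

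First I would establish the existence of a maximal (with respect to inclusion) $r$-separated subset $S\subseteq X$. The collection of $r$-separated subsets of $X$, ordered by inclusion, is nonempty (singletons qualify), and the union of any chain of such sets is again $r$-separated, since the defining condition involves only pairs of points and any two points of the union already lie in a common member of the chain. Hence Zorn's lemma applies and yields a maximal such $S$.

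Next I would extract $r$-density from maximality: for every $x\in X\setminus S$ we must have $d_X(x,S)<r$, for otherwise $S\cup\{x\}$ would be $r$-separated, contradicting the maximality of $S$. Consequently every point of $X$ lies within $r$ of $S$, i.e. $X\subseteq D_r(S)$, so $S$ is an $r$-net in $X$.

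Finally I would verify that the inclusion $\iota\colon S\hookrightarrow X$ is a coarse equivalence. Since $S$ carries the subspace metric, $\iota$ is an isometric embedding, so the coarse-embedding inequalities hold with $p_1(t)=p_2(t)=t$, both nondecreasing and unbounded. The $r$-density established above gives $X\subseteq D_r(\iota[S])$, so the extra condition holds with $N=r$; thus $\iota$ is a coarse equivalence, and since coarse equivalence is symmetric we conclude that the $r$-space $S$ is coarsely equivalent to $X$. The only genuinely nontrivial point is the existence step, which rests on Zorn's lemma; everything else is a direct unwinding of the definitions.
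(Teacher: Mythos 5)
Your proof is correct and follows essentially the same route as the paper: a maximal $r$-separated subset obtained via Zorn's lemma (the paper phrases this as a maximal $r$-disjoint $0$-bounded family of singletons), $r$-density of that subset forced by maximality, and the observation that the isometric inclusion of an $r$-net is a coarse equivalence. Your write-up just spells out the chain-union step in Zorn's lemma and the choice $p_1(t)=p_2(t)=t$, $N=r$, which the paper leaves implicit.
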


\begin{proof}
Indeed, notice that from Zorn's Lemma we have maximal in repsect to inclusion r-disjoint 0-bounded\footnote{It consists of single points.} family $\mc{U}$. From maximality of $\mc{U}$ we have that $X\subset D_r(\bigcup\mc{U})$ and so X$=D_r(\bigcup\mc{U})$. Straightforwardly from the definition of coarse equivalence each space is coarsly equivalent with its r-hull (generalized ball of radius r) for any $r>0$.
\end{proof}

By $k\m{Z}$ we denote space $\{kl~|~l\in\m{Z}\}$ with standard metric. Generally, by $kX$ we shall understand k-space coarsly equivalent with space $X$.
\\\\
THREE FUNDAMENTAL FACTS (TFF):

\begin{enumerate}
	\item trasdim$\m{Z}^n=n$ (see~\cite{Rad})
	\item $k\m{Z}^n$ has 2k-dimension equal to n. (see~\cite{Rad})
	\item For any $s\geq 2k$ there is no cover of cube $[-s,s]^n\cap k\m{Z}^n$ constisting of n families 2k-disjoint and s-bounded. (see~\cite{Rad}, prove of Lemma~3)\footnote{Note that this prove bases on contradiction with Lebesgue Covering Theorem and therefore it's a fact of classical dimension theory.}
\end{enumerate}

Naturally $\forall_{k\in\m{N}}$~trasdim$k\m{Z}^n=n$ (coarse equivalence).
\\\\
Now we give two standard examples of metric spaces with trasdim$=\omega$ and trasdim$=\infty$.

\begin{itemize}
	\item trasdim$\varoplus_{i<\omega}\m{Z}_i=\infty$, where $\m{Z}_i=\m{Z}$ for every $i\in\m{N}$; assume taxicab metric. This fact follows easily from TFF.
	\item Space $\bigcup_{k=1}^{\infty}k\m{Z}^k$ treated as subspace of the former one, has trasdim$\bigcup_{k=1}^{\infty}k\m{Z}^k=\omega$. Indeed, for every $k\in\m{N}$ we can take one k-disjoint family which covers the complement of $(k-1)$-dimensional subspace.
\end{itemize} 

Now let's give more general examples.

\begin{stw}Fix sequence of metric spaces $(X_i)_{i=1}^\infty$ such that $sup\{trasdimX_i~|~i\in\m{N}\}=\alpha$. Then trasdim($as~\bigcup_{i=1}^\infty~iX_i)=\alpha$.\end{stw}

Let $B^k_r$ denote the ball in $\m{Z}^k$ centred in zero and of radius r. We give three interesting examples which undermine some naive intuitions. Proofs of following equalities are easy exercises for applying TFF.

\begin{itemize}
	\item trasdim(as~$\bigcup_{k=1}^\infty B^k_k)=\infty$
	\item trasdim(as~$\bigcup_{k=1}^\infty kB^k_k)=\omega$
	\item trasdim(as~$\bigcup_{k=1}^\infty B^n_k)=n$
\end{itemize}

There is another example. Fix any strictly increasing sequence of natural numbers $c=(c_i)_{i=1}^\infty$ and direct sum $\varoplus_{i<\omega}\m{Z}_i$ (with taxicab metric), where $\m{Z}_i=\m{Z}$ for every $i\in\m{N}$. Here we identify $\m{Z}_i$ with due subspace of $\varoplus_{i<\omega}\m{Z}_i$.
\begin{equation}\label{eq:koronny}
\bigcup~c:=\bigcup_{n=1}^\infty (c_1\m{Z}_1\varoplus c_2\m{Z}_2\varoplus\dots\varoplus c_n\m{Z}_n)
\end{equation}

Supposedly this space has trasdim $\omega$; we shall refer to the matter in~\ref{chap:hipotezy}. It would be then an example of space with trasdim $\omega$ considerably different from the former ones, which where characterised by the fact that for every $r<\infty$ there exists one r-disjoint family whose complement is finite dimensional.

\section{Sums of proper spaces}\label{sec:sumy}

\begin{df}Metric space is \emph{proper} if its every bounded and closed subspace is compact.\end{df}

Clearly, if X is discrete and proper, then it's also locally finite. Particulary (see~Note~\ref{uw:subcl}) every proper metric space is coarsly equivalent with some locally finite k-space, for any fixed $k\in\m{N}$. Note that every locally finite space is countable (unless metric has infinite values). Theory of asymptotic dimension mainly concerns with proper spaces and therefore the loss of generality in this chapter is fairly small.

\begin{uw}\label{uw:subcl}Every subset of closed and discrete subspace of a given space is also closed and discrete.\end{uw}

\begin{twr}\label{twr:zwr}\emph{Fission Theorem.} Let X - proper metric space and $\mc{B}$ - ascending family of bounded subspaces of X such that $\bigcup\mc{B}=$X. Then trasdimX$=$Trasdim$(\mc{B})$.\end{twr}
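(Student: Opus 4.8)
The plan is to prove the equality by showing that the two supertrees $T^{\rightarrow}(\mc{B})$ and $T^{\rightarrow}(X,d_X)$ in fact \emph{coincide}, which yields the equality of ranks at once and simultaneously settles the $\infty$ case, since one tree is well founded exactly when the other is. First I would dispose of the easy inclusion $T^{\rightarrow}(\mc{B})\subseteq T^{\rightarrow}(X,d_X)$. Fix a nondecreasing finite $\tau$; if $\tau\notin T^{\rightarrow}(X,d_X)$, then $X$ carries a $D$-bounded $\tau$-cover for some $D$, and intersecting every member of this cover with a fixed $B\in\mc{B}$ produces a $D$-bounded $\tau$-cover of $B$ (diameters only shrink and $\tau(i)$-disjointness only improves under passing to subsets), with $D$ independent of $B$. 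Hence $\tau\notin T^{\rightarrow}(\mc{B})$, so $\mathrm{Trasdim}\,\mc{B}=\mathrm{rank}\,T^{\rightarrow}(\mc{B})\leq\mathrm{rank}\,T^{\rightarrow}(X,d_X)=\mathrm{trasdim}\,X$. The same restriction applied to subspaces gives $T^{\rightarrow}(B)\subseteq T^{\rightarrow}(X,d_X)$ for each $B$, so $L(\mc{B})=\bigcup_{B}T^{\rightarrow}(B)\subseteq T^{\rightarrow}(X,d_X)$ has countable rank whenever $\mathrm{trasdim}\,X<\infty$; this is the side condition that makes $\mathrm{Trasdim}\,\mc{B}$ well defined.

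The heart of the proof is the reverse inclusion $T^{\rightarrow}(X,d_X)\subseteq T^{\rightarrow}(\mc{B})$, which contrapositively is a \emph{gluing statement}: if there is a uniform $D$ with every $B\in\mc{B}$ admitting a $D$-bounded $\tau$-cover, then $X$ itself admits a uniformly bounded $\tau$-cover. I would first reduce to the locally finite case. By Theorem~\ref{twr:rX} the space $X$ is coarsely equivalent to a locally finite $k$-space; trasdim is a coarse invariant, and the t-embedding machinery of Theorem~\ref{twr:twl} applies verbatim to $T^{\rightarrow}(\mc{B})$, so $\mathrm{Trasdim}$ of the correspondingly transported family is also unchanged. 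Thus I may assume $X$ is discrete and locally finite, so that every bounded subset is finite (cf. Note~\ref{uw:subcl}). Fixing a basepoint, set $K_n=\overline{B}(x_0,n)$, a finite set with $\bigcup_n K_n=X$. Since $\mc{B}$ is ascending and covers $X$, each finite $K_n$ lies in a single $B^{(n)}\in\mc{B}$, and restricting the $D$-bounded $\tau$-cover of $B^{(n)}$ to $K_n$ yields a $D$-bounded $\tau$-cover of $K_n$.

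Now I would run a compactness (König) argument to assemble these local covers coherently. Let $C_n$ be the set of all $D$-bounded $\tau$-covers of $K_n$; since $K_n$ is finite there are only finitely many subsets, so $C_n$ is a nonempty finite set. Intersecting each member of a cover in $C_{n+1}$ with $K_n$ defines a map $C_{n+1}\to C_n$, because boundedness, $\tau(i)$-disjointness and the covering property all survive restriction. The inverse limit of this system of nonempty finite sets is nonempty, giving a coherent sequence $\mc{U}^{(n)}\in C_n$ with $\mc{U}^{(n+1)}|_{K_n}=\mc{U}^{(n)}$. For a fixed point $p$, once $n$ is large enough that $K_n\supseteq\overline{B}(p,D)$ the $D$-bounded piece containing $p$ already lies inside $K_n$ and, by coherence, never changes thereafter; so the pieces stabilize and their stable values assemble into a single $D$-bounded $\tau$-cover of $X=\bigcup_n K_n$. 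Hence $\tau\notin T^{\rightarrow}(X,d_X)$, which completes the inclusion and therefore the equality.

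The hard part is obtaining the global cover with the \emph{exact} disjointness constants $\tau(i)$. A naive fission that covers $X$ annulus by annulus and two-colours adjacent annuli only produces a cover indexed by the \emph{doubled} sequence, which would give an inequality between ranks rather than the desired equality; routing the argument through a locally finite model, where each $K_n$ admits only finitely many candidate covers, is precisely what lets König's lemma select a coherent family with no loss of constants. The one subtlety off the main line is the claim that $\mathrm{Trasdim}$ of a metric family is a coarse invariant, needed to justify the reduction to the locally finite case; I expect this to follow by applying Theorem~\ref{twr:twl} to $T^{\rightarrow}(\mc{B})$ exactly as it was applied to single spaces in the permanence results, using also the monotonicity recorded in Theorem~\ref{twr:fraktal}.
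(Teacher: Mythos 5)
Your core engine is correct, and it is genuinely different from the paper's. Both arguments share the same skeleton: the easy inclusion $T^{\rightarrow}(\mc{B})\subseteq T^{\rightarrow}(X,d_X)$ by taking traces, followed by a compactness argument that glues uniformly $D$-bounded $\tau$-covers of the members of $\mc{B}$ into a single $\tau$-cover of $X$ in the locally finite case. But where you exhaust the (necessarily countable) locally finite space by finite balls $K_n$ --- each of which lies inside a single member of the ascending family, so inherits a $D$-bounded $\tau$-cover by restriction --- and then run K\"onig's lemma on the finite nonempty sets $C_n$ of such covers with the restriction maps $C_{n+1}\to C_n$, the paper instead proves a general ultrafilter ``Compactness Rule'' and applies it to the system $(A_b)_{b\in\mc{B}}$ indexed by the \emph{whole} family, gluing via the sets $X_P=\bigcup_{d\supset b}P_d$. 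Your inverse-limit-plus-stabilization step is sound: a $D$-bounded piece containing $p$ is trapped in $\overline{B}(p,D)\subseteq K_n$ once $n$ is large, so coherence freezes it, and the frozen pieces form a $D$-bounded, $\tau(k)$-disjoint cover. Your route is more elementary; what the paper's heavier tool buys is reusability, since the same Compactness Rule powers the later refinements in which $\mc{B}$ is not a chain but merely ``covers finite sums,'' where a countable nested exhaustion by sets contained in single members of $\mc{B}$ is no longer available.

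The one genuine weak point is your justification of the reduction to the locally finite case. You invoke a lemma --- ``Trasdim of a metric family is a coarse invariant'' --- and defer it to Theorem~\ref{twr:twl}; whether this is even workable depends on what the transported family is, which you never specify. If you transport $\mc{B}$ to the net $DX$ by intersection, $D\mc{B}=\{b\cap DX~|~b\in\mc{B}\}$, then the hard direction of the invariance is not formal: $b\cap DX$ need not be coarsely dense \emph{in} $b$ (it can even be empty for a thin member $b$ weaving between net points), so uniform covers of the sets $b\cap DX$ do not yield, by any hulling argument, uniform covers of the sets $b$; proving that equality is essentially the theorem you are trying to prove. The lemma does hold if you transport by \emph{forward images} $\{g[b]\}$ under the coarse equivalence $g$ onto the locally finite model, redoing both cover-transport estimates of Lemma~\ref{lem:major} uniformly in $b$ and then the t-embedding argument --- real work, not ``verbatim.'' The paper sidesteps this entirely: it proves the locally finite case first and then closes the general case with the chain $\mathrm{trasdim}(X)=\mathrm{trasdim}(DX)=\mathrm{Trasdim}(D\mc{B})\leq\mathrm{Trasdim}(\mc{B})\leq\mathrm{trasdim}(X)$, which uses only the \emph{easy} (restriction) direction of family transport together with single-space coarse invariance and the locally finite case. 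Every ingredient in that chain is something you have already established, so your proof becomes complete if you reorganize the reduction this way instead of appealing to the unproved invariance lemma.
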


\begin{wn}For every countable ordinal $\alpha$ such that there is proper metric space X and trasdimX$=\alpha$, there exists asymptotic sum S of finite dimensional spaces (or even finite spaces) such that trasdimS$=\alpha$.\end{wn}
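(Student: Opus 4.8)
The plan is to combine the Fission Theorem~\ref{twr:zwr} with Assertion~\ref{stw:assum}: the former turns the proper space $X$ into a countable \emph{ascending} family of bounded subspaces of the same dimension, while the latter turns such a family back into an asymptotic sum without changing the dimension. The crucial preliminary observation is that every bounded metric space is $0$-dimensional in the asymptotic sense, since for any $r>0$ the whole space is a single $r$-disjoint (vacuously) uniformly bounded family covering itself, so asdim $=0$. Consequently \emph{any} decomposition of $X$ into bounded pieces automatically produces finite dimensional summands, which is already enough for the ''finite dimensional spaces'' version of the statement.

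For the stronger ''finite spaces'' version I would first replace $X$ by a coarsely equivalent locally finite $k$-space; this is legitimate because every proper metric space admits such a representative (see the remarks following Theorem~\ref{twr:rX} together with Note~\ref{uw:subcl}) and because coarse equivalence preserves trasdim (by the coarse invariance theorem of the previous section). In a locally finite space every bounded set is finite, so the bounded pieces produced below become finite rather than merely bounded, while the value $\alpha$ is unchanged.

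Next, fixing a basepoint $x_{0}\in X$, I would set $\mc{B}=\{B(x_{0},n)~|~n\in\m{N}\}$. Since the metric is finite this is an ascending family of bounded subspaces with $\bigcup\mc{B}=X$, and in the locally finite model each $B(x_{0},n)$ is finite. The Fission Theorem~\ref{twr:zwr} then gives Trasdim$(\mc{B})=$trasdim$X=\alpha$. Because $\mc{B}$ is countable and consists of bounded (indeed finite) spaces, the ''Moreover'' clause of Assertion~\ref{stw:assum} applies and yields trasdim$(\textrm{as}~\bigcup\mc{B})=\alpha$. Taking $S=\textrm{as}~\bigcup\mc{B}$ finishes the proof, as $S$ is then an asymptotic sum of finite dimensional — in fact finite — spaces with trasdim$S=\alpha$.

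I expect no substantial obstacle beyond careful bookkeeping: the only real care needed is to verify that $\mc{B}$ simultaneously satisfies the hypotheses of both invoked results (ascending, bounded, covering for the Fission Theorem; countable and bounded for Assertion~\ref{stw:assum}) and to justify the reduction to a locally finite model. Once these are in place the corollary is a direct composition of the two theorems, and the $0$-dimensionality of bounded spaces supplies the ''finite dimensional'' qualifier for free.
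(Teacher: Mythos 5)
Your proof is correct and follows exactly the route the paper intends for this corollary: apply the Fission Theorem~\ref{twr:zwr} to an ascending family of balls, then compose with the bounded-spaces (``Moreover'') clause of Assertion~\ref{stw:assum}, using coarse invariance of trasdim and the locally finite model from Theorem~\ref{twr:rX} and Note~\ref{uw:subcl} to upgrade bounded pieces to finite ones. The paper leaves this composition implicit (no proof is printed for the corollary), and your bookkeeping --- ascending, bounded, countable, covering, plus the observation that bounded spaces are asymptotically finite dimensional --- fills it in correctly.
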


To prove our theorem we need Compactness Rule.

\subsection*{Compactness Rule}

I assume that the reader is familiar with definition of filter (on a set). A filter is an ultrafilter iff it's maximal filter with respect to inclusion; every filter extends to an ultrafilter (Zorn's Lemma). We recall basic facts concerning ultrafilters.

\begin{fkt}Let $\mc{U}$ - ultrafilter. If $A\notin\mc{U}$ then its complement $A'\in\mc{U}$. Note that (without prove) this property can be assumed as definition of an ultrafiler.\end{fkt}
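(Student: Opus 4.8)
The plan is to derive the dichotomy directly from the maximality built into the definition of an ultrafilter: I would exhibit the filter generated by $\mc{U}\cup\{A'\}$ and show that maximality forces $A'$ to have belonged to $\mc{U}$ all along. Throughout, $X$ denotes the ground set and $A'=X\setminus A$ its complement.

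First I would record the elementary consequence of the hypothesis $A\notin\mc{U}$: no member $U\in\mc{U}$ can satisfy $U\subseteq A$. Indeed, were $U\subseteq A$ for some $U\in\mc{U}$, the upward closure of a filter would put $A\in\mc{U}$, contradicting the assumption. Rephrasing the failure of $U\subseteq A$, this says precisely that $U\cap A'\neq\emptyset$ for every $U\in\mc{U}$.

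Next I would introduce the collection $\mc{V}=\{W\subseteq X~|~U\cap A'\subseteq W~\textrm{for some}~U\in\mc{U}\}$, which is exactly the filter generated by adjoining $A'$ to $\mc{U}$. The core of the argument is to verify that $\mc{V}$ is a \emph{proper} filter: it is upward closed by construction; it is closed under finite intersection because $\mc{U}$ is, so that $(U_1\cap A')\cap(U_2\cap A')=(U_1\cap U_2)\cap A'$ is again of the required form; and---the one delicate point---it does not contain $\emptyset$, precisely because the previous step guarantees $U\cap A'\neq\emptyset$ for every $U\in\mc{U}$. Since each $U\in\mc{U}$ contains $U\cap A'$, we also get $\mc{U}\subseteq\mc{V}$, while $A'\in\mc{V}$ is immediate.

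Finally I would invoke maximality: $\mc{V}$ is a proper filter containing $\mc{U}$, so by the definition of an ultrafilter $\mc{V}=\mc{U}$; since $A'\in\mc{V}$, this yields $A'\in\mc{U}$, as claimed. The main obstacle is the properness check of $\mc{V}$, namely ensuring $\emptyset\notin\mc{V}$: this is the sole place where the hypothesis $A\notin\mc{U}$ is actually consumed, and once it is in hand the remaining filter axioms and the maximality step are routine.
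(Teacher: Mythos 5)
Your proof is correct, and it rests on the same two ingredients as the paper's—the hypothesis $A\notin\mc{U}$ and maximality—but it deploys them in the opposite order, which makes it worth comparing. The paper argues: since $A\notin\mc{U}$, maximality of $\mc{U}$ yields some $V\in\mc{U}$ with $V\cap A=\emptyset$; then $V\subset A'$ and upward closure give $A'\in\mc{U}$. You instead adjoin $A'$ directly: the hypothesis guarantees $U\cap A'\neq\emptyset$ for every $U\in\mc{U}$, so the generated filter $\mc{V}$ is proper, and maximality collapses $\mc{V}$ back to $\mc{U}$, forcing $A'\in\mc{U}$. The practical advantage of your version is that it is self-contained: the paper's step ``from maximality there exists $V\in\mc{U}$ such that $V\cap A=\emptyset$'' is not literally immediate from the definition of a maximal filter---its justification is exactly an extension argument of the kind you spell out (if every $V\in\mc{U}$ met $A$, then adjoining $A$ would produce a proper filter strictly containing $\mc{U}$, since $A\notin\mc{U}$). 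So your write-up supplies in full the properness check that the paper compresses into a single appeal to maximality; the paper's route, once that step is granted, finishes more quickly via upward closure alone.
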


\begin{proof}
If $A\notin\mc{U}$ then from maximality of $\mc{U}$ there exists a set $V\in\mc{U}$ such that $V\cap A=\emptyset$. But then $V\subset A'$ and so $A'\in\mc{U}$ form the definition of a filter.
\end{proof}

\begin{fkt}Every ultrafilter $\mc{U}$ is prime filter, that is, if $A\cup B\in\mc{U}$ then $A\in\mc{U}$ or $B\in\mc{U}$ (it's also condition equivalent to the definition of ultrafilter.)\end{fkt}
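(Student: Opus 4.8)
The plan is to argue by contraposition, reducing the claim to the preceding Fact together with the elementary closure axioms of a filter. So I would assume that neither $A$ nor $B$ belongs to $\mc{U}$ and derive that $A\cup B\notin\mc{U}$ as well, which is exactly the contrapositive of the implication to be shown.

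First I would invoke the previous Fact: since $A\notin\mc{U}$ we obtain $A'\in\mc{U}$, and since $B\notin\mc{U}$ we obtain $B'\in\mc{U}$. Next, using that every filter is closed under finite intersections, I would conclude that $A'\cap B'\in\mc{U}$. By De Morgan's law $A'\cap B'=(A\cup B)'$, and hence $(A\cup B)'\in\mc{U}$.

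Finally I would note that a proper filter can never contain a set together with its complement: were both $C$ and $C'$ in $\mc{U}$, their intersection $\emptyset$ would belong to $\mc{U}$, and then closure under supersets would make $\mc{U}$ the entire power set, contradicting properness of a filter. Applying this with $C=A\cup B$, the membership $(A\cup B)'\in\mc{U}$ obtained above forces $A\cup B\notin\mc{U}$, completing the contraposition.

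I expect no genuine obstacle here: the whole argument rests on the already-established Fact and the defining properties of a filter, so the only points requiring care are to apply the De Morgan identity in the correct direction and to keep straight which way the contraposition runs. The parenthetical remark that primeness is itself equivalent to being an ultrafilter would follow by running the same chain of implications in reverse, but that converse is not part of what must be proved in this statement.
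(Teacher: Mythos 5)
Your argument is correct, but it is organized differently from the paper's. The paper does not pass to the full contrapositive: it assumes $A\cup B\in\mc{U}$ and $A\notin\mc{U}$, gets $A'\in\mc{U}$ from the preceding Fact (one application only), intersects to obtain $A'\cap(A\cup B)\in\mc{U}$, and then observes $A'\cap(A\cup B)\subset B$, so that closure under supersets gives $B\in\mc{U}$ directly. Your route instead applies the preceding Fact twice (to both $A$ and $B$), combines via De Morgan to get $(A\cup B)'\in\mc{U}$, and then needs one extra ingredient the paper's proof never uses: that a proper filter cannot contain a set together with its complement (i.e., $\emptyset\notin\mc{U}$). That step is fine under the standard convention that filters are proper, which the paper's setup implicitly requires (otherwise the power set would be the unique maximal filter and the preceding Fact would fail), but it does make your proof rest on one more axiom of the definition. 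What the paper's version buys is economy -- a single use of the complement Fact and no appeal to properness; what yours buys is symmetry in $A$ and $B$ and a cleaner logical shape (hypotheses on $A$, $B$ in, conclusion about $A\cup B$ out), which also makes the parenthetical converse direction easier to see. Both are complete proofs of the stated implication.
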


\begin{proof}
Suppose that $A\notin\mc{U}$. The former Fact implies that $A'\in\mc{U}$. From the definition of filter it follows that $A'\cap (A\cup B)\in\mc{U}$. But $A'\cap (A\cup B)\subset B$ and so $B\in\mc{U}$.
\end{proof}

\begin{fkt}If a family $\mc{X}$ of subsets of a set X has finite intersection property (that is, every finite intersection of sets from this family is nonempty) then this family extends to an ultrafilter on X.\end{fkt}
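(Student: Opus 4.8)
The plan is to reduce to the fact recorded at the start of this section — that every filter extends to an ultrafilter via Zorn's Lemma — so it suffices to produce a genuine filter containing $\mc{X}$.

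First I would pass to the filter \emph{generated} by $\mc{X}$, namely
$$\mc{F}=\{A\subset X~|~\exists_{n\in\m{N}}\exists_{X_1,\dots,X_n\in\mc{X}}~X_1\cap\dots\cap X_n\subset A\},$$
and then check the filter axioms. Upward closure is immediate from the definition. Closure under finite intersection follows by concatenating witnesses: if $X_1\cap\dots\cap X_n\subset A$ and $Y_1\cap\dots\cap Y_m\subset B$, then $X_1\cap\dots\cap X_n\cap Y_1\cap\dots\cap Y_m\subset A\cap B$, so $A\cap B\in\mc{F}$. Finally $X\in\mc{F}$ (take $A=X$ with any witness).

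The one place where the hypothesis enters — and the only point that is not purely formal — is verifying that $\mc{F}$ is a \emph{proper} filter, i.e. $\emptyset\notin\mc{F}$. Were $\emptyset\in\mc{F}$, there would be $X_1,\dots,X_n\in\mc{X}$ with $X_1\cap\dots\cap X_n\subset\emptyset$, that is $X_1\cap\dots\cap X_n=\emptyset$, contradicting the finite intersection property of $\mc{X}$. Thus the finite intersection property is precisely what guarantees that the generated family is a legitimate filter rather than the improper one.

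Having produced the filter $\mc{F}\supset\mc{X}$, I would simply invoke the fact stated earlier that every filter extends to an ultrafilter (Zorn's Lemma applied to the poset of proper filters containing $\mc{F}$, ordered by inclusion, whose chains are bounded above by their unions). The resulting ultrafilter contains $\mc{F}$, hence contains $\mc{X}$, which completes the argument. There is no substantial obstacle: the entire content is the elementary check that the finite intersection property makes the generated filter proper, after which the extension step is already available.
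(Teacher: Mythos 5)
Your proposal is correct and follows exactly the paper's route: the paper's proof likewise takes the family of all supersets of all finite intersections of sets from $\mc{X}$, notes it is a filter, and relies on the previously stated fact that every filter extends to an ultrafilter via Zorn's Lemma. You have merely written out the verification (upward closure, closure under intersections, and properness from the finite intersection property) that the paper leaves to the reader.
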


\begin{proof}
The reader can easily check that the family of all supersets of all finite intersections of sets from $\mc{X}$ is a filter. 
\end{proof}

Let $(A_i)_{i\in I}$ be a family of nonempty finite sets indexed by arbitrary set I. We say that a set $S\subset\bigcup_{J\subset I}(\Pi_{i\in J}A_i)$ \emph{covers finite subsets} if for every finite subset $K\subset I$ exists $g\in S$ such that $K\subset dom(g)$
\\\\
Function $f\in\Pi_{i\in I}$ is \emph{filtering extension of S} if for every finite $K\subset I$ exists $g\in S$ such that $f_{|K}=g_{|K}$.

\begin{twr}\emph{Compactness Rule.} With the notation above, if S covers finite subsets then there exists filtering extension of S.\end{twr}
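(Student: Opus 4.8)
The plan is to exploit the ultrafilter machinery just assembled: the finiteness of the $A_i$, together with the fact that an ultrafilter is a prime filter, will let me read off a single coordinate value at each index. First I would, for every finite $K\subset I$, form the trace $S_K=\{g\in S\mid K\subset dom(g)\}$. The hypothesis that $S$ covers finite subsets says precisely that each $S_K$ is nonempty, and since $S_{K_1}\cap\dots\cap S_{K_n}\supset S_{K_1\cup\dots\cup K_n}\neq\emptyset$, the family $\{S_K\}_K$ has the finite intersection property. By the Fact that any family with the finite intersection property extends to an ultrafilter, it extends to an ultrafilter $\mc{U}$ on $S$, and by construction $S_K\in\mc{U}$ for every finite $K$.

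Next I would define the extension $f$ coordinatewise. Fix $i\in I$ and, for $a\in A_i$, put $E_{i,a}=\{g\in S\mid i\in dom(g)\textrm{ and }g(i)=a\}$. Since $A_i$ is finite, $S_{\{i\}}=\bigcup_{a\in A_i}E_{i,a}$ is a \emph{finite} union, and as $S_{\{i\}}\in\mc{U}$ the prime-filter property (iterated over this finite union) forces $E_{i,a}\in\mc{U}$ for at least one $a$; because distinct $E_{i,a}$ are disjoint and a proper filter contains no two disjoint members, this $a$ is unique. Setting $f(i)$ equal to this unique value yields a total choice function $f\in\prod_{i\in I}A_i$.

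Finally I would verify that $f$ is a filtering extension. Given a finite $K=\{i_1,\dots,i_n\}\subset I$, each $E_{i_j,f(i_j)}$ lies in $\mc{U}$, as does $S_K$; hence the intersection $S_K\cap\bigcap_{j=1}^{n}E_{i_j,f(i_j)}$ is in $\mc{U}$ and in particular nonempty. Any $g$ in this intersection satisfies $K\subset dom(g)$ and $g(i_j)=f(i_j)$ for all $j$, that is $f_{|K}=g_{|K}$ with $g\in S$, which is exactly what is required.

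The main obstacle is the well-definedness of $f$ in the middle step: both the existence and the uniqueness of the coordinate value hinge on genuine features of ultrafilters. Existence needs the finiteness of $A_i$, since the prime-filter property only splits finite unions and the argument would break for infinite $A_i$; uniqueness needs that $\mc{U}$ is a proper filter closed under intersection, so that $\emptyset\notin\mc{U}$ rules out two disjoint members. Once $f$ is known to be a legitimate choice function, the filtering property is a soft consequence of closure of $\mc{U}$ under finite intersection, so no further computation is needed.
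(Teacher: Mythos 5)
Your proof is correct and takes essentially the same route as the paper's: both extend the finite-intersection-property family of sets of the form $\{g\in S~|~i\in dom(g)\}$ to an ultrafilter on $S$, use primeness together with the finiteness of $A_i$ to select each coordinate $f(i)$, and conclude because the relevant finite intersections lie in the ultrafilter and are therefore nonempty. The only cosmetic differences are that you index the generating family by all finite $K\subset I$ rather than by singletons (these generate the same filter), and that you spell out the final verification which the paper compresses into ``from properties of filters''.
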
 

\begin{proof}
Define the family of sets $\mc{B}=\{B_i~|~i\in I\}$, where $B_i=\{g\in S~|~i\in dom(g)\}$. Since S covers finite subsets it follows that $\mc{B}$ has finite intersection property and so extends to an ultrafilter $\mc{U}$ on S. For every $i\in I$ and $a\in A_i$ let $B_i^a=\{g\in B_i~|~g(i)=a\}$. Clearly $B_i=\bigcup_{a\in A_i}B_i^a$ and for every $i\in I$ it's a finite sum. Ultrafilters are prime and so there exists $a_i\in A_i$ such that $B_i^{a_i}\in\mc{U}$. Define $f\in\Pi_{i\in I}A_i$ in the following way: $f(i)=a_i$. From properties of filters we conclude that f is indeed a filtering extension of S.
\end{proof}

\begin{df}The \emph{trace} on a subset $Y\subset X$ of a family $\mc{U}$ of subsets of a set X, we call the family of traces on Y of sets belonging to $\mc{U}$. The trace of a $\tau$-covering $\tau$ of set X is - similary - the sequence of traces of subsequent families belonging to $\tau$ (we preserve the ordering of the families).\end{df}

\begin{proof}
(of Theorem~\ref{twr:zwr}) Firstly let's assume that X is locally finite.

If for finite sequence $\tau$ there exists $\tau$-covering of space X then due families of traces of this covering create uniformly bounded family of uniformly bounded covers of elements of $\mc{B}$. By contraposition we therefore have $T^\rightarrow(\mc{B})\subset T^\rightarrow(X,d)$ and so Trasdim$\mc{B}\leq$trasdimX.

Now suppose that for some finite sequence $\tau$ and some real number $D<\infty$ there exists D-bounded family of $\tau$-coverings of sets belonging to $\mc{B}$. Since every set $b\in\mc{B}$ is finite, the set of all D-bounded $\tau$-coverings of set b is finite, and from our assumptions - also nonempty. Denote the latter set by $A_b$. Let $S\subset\bigcup_{B\subset\mc{B}}(\Pi_{b\in B}A_b)$ be the set of all functions $g\in\bigcup_{B\subset\mc{B}}(\Pi_{b\in B}A_b)$ such that for every pair $c,d\in dom(g)$ such that $c\subset d$ the trace on b of $\tau$-covering $g(d)$ of set d is equal to $g(b)$, briefly: $g(d)_{|b}=g(b)$. 

Since $\mc{B}$ is ascending family of sets, our assumptions imply that S covers finite sums. From the Compactness Rule there exists filtering extension of S, denoted by $f\in\Pi_{b\in\mc{B}}A_b$. We must show that f determines D-bounded $\tau$-covering of space X. It suffices to show that for $k\in dom(\tau)$ the family of $\tau(k)$-families\footnote{Recall that $\tau$-covering is a sequence of families and so $\tau(k)$-family is a k-th family in respective $\tau$-covering. Let's stick to this notation for clarity.} given by $f$ determines D-bounded $\tau(k)$-family of subsets of X.

Choose any $b\in\mc{B}$ and consider $\tau(k)$-family belonging to $\tau$-covering $f(b)$ of b. Take any set P belonging to this family and consider the family of sets $P_d$ (it's defined uniquely) indexed by sets d such that $b\subset d$ and assure that $P_d$ belongs to $\tau(k)$-family of $\tau$-covering $f(d)$ of d and $P_d\cap b=P$. Define set $X_P:=\bigcup_{d\in\mc{B}; d\supset b}P_d$. Due $\tau(k)$-family for the whole spcace X is equal to $\bigcup_P \{X_P\}$. It's easy to check that it is D-bounded and $\tau(k)$-disjoint family and the finite sum of such families indexed by $k\in dom(\tau)$ is indeed a $\tau$-covering of. Therefore we have $T^\rightarrow(X,d)\subset T^\rightarrow(\mc{B})$ and Trasdim$\mc{B}\geq$trasdimX follows.

Now let X be proper, but not neccesarily locally finite. Fix any discrete and closed subspace of X which is coarsly equivalent to X (see: Theorem~\ref{twr:rX}) and denote it by DX. Put $D\mc{B}:=\{b\cap DX~|~b\in\mc{B}\}$. We have the following sequence of inequalities:
$trasdim(DX)=Trasdim(D\mc{B})\leq Trasdim(\mc{B})\leq trasdim(X)=trasdim(DX)$.
\end{proof}

The assumptions of Theorem~\ref{twr:zwr} can be refined.

\begin{twr}Let X be a proper metric space and $\mc{B}$ - family of bounded subspaces of X such that $\bigcup\mc{B}=$X and $\mc{B}$ has supersets of any finite sum of its elements (we say that $\mc{B}$ covers finite sums). Then trasdimX$=$Trasdim$(\mc{B})$.\end{twr}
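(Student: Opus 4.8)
The plan is to follow the proof of the Fission Theorem (Theorem~\ref{twr:zwr}) essentially verbatim, isolating the single spot where the hypothesis that $\mc{B}$ is ascending was actually used and replacing it by the hypothesis that $\mc{B}$ covers finite sums. As before I would prove the two inclusions $T^\rightarrow(\mc{B}) \subseteq T^\rightarrow(X,d)$ and $T^\rightarrow(X,d) \subseteq T^\rightarrow(\mc{B})$, from which $\textrm{trasdim}X = \textrm{Trasdim}(\mc{B})$ follows by monotonicity of rank with respect to inclusion of trees.

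First I would dispose of the inclusion $T^\rightarrow(\mc{B}) \subseteq T^\rightarrow(X,d)$. This direction never used the ascending hypothesis: given a $\tau$-covering of $X$, its trace on any $b \in \mc{B}$ is a uniformly bounded $\tau$-covering of $b$, since traces preserve $r$-disjointness, do not increase diameters, and still cover $b$ because $b \subseteq X$; moreover the bound is uniform in $b$. Contraposing gives $T^\rightarrow(\mc{B}) \subseteq T^\rightarrow(X,d)$ and hence $\textrm{Trasdim}(\mc{B}) \leq \textrm{trasdim}X$. This step requires only $\bigcup\mc{B} = X$, so it transfers without change.

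For the reverse inclusion I would first reduce to the locally finite case exactly as in Theorem~\ref{twr:zwr}: pick a closed discrete subspace $DX$ coarsely equivalent to $X$ (Theorem~\ref{twr:rX}) and set $D\mc{B} = \{b \cap DX~|~b \in \mc{B}\}$. Here I must check that $D\mc{B}$ inherits both hypotheses. That $\bigcup D\mc{B} = DX$ is immediate, and $D\mc{B}$ covers finite sums because $\bigcup_i (b_i \cap DX) = (\bigcup_i b_i) \cap DX \subseteq b \cap DX$ whenever $\bigcup_i b_i \subseteq b$. The chain $\textrm{trasdim}(DX) = \textrm{Trasdim}(D\mc{B}) \leq \textrm{Trasdim}(\mc{B}) \leq \textrm{trasdim}(X) = \textrm{trasdim}(DX)$ then closes the argument, the middle inequality coming from restricting $\tau$-coverings of elements of $\mc{B}$ to $DX$.

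It therefore remains to prove $T^\rightarrow(X,d) \subseteq T^\rightarrow(\mc{B})$ for locally finite $X$, and this is the only place where the new hypothesis does any work. Assuming $\tau \notin T^\rightarrow(\mc{B})$, i.e. a uniform bound $D$ on $\tau$-coverings of all elements of $\mc{B}$, I take $A_b$ to be the finite, nonempty set of $D$-bounded $\tau$-coverings of $b$ and let $S$ be the set of compatible families $g$, those with $g(d)|_{c} = g(c)$ whenever $c \subseteq d$ in $\textrm{dom}(g)$, exactly as in the Compactness Rule setup. The one claim needing the hypothesis is that $S$ covers finite subsets: given finite $K = \{b_1,\dots,b_n\} \subseteq \mc{B}$, I invoke covers-finite-sums to choose $b \in \mc{B}$ with $b_1 \cup \dots \cup b_n \subseteq b$, take any $D$-bounded $\tau$-covering $\mc{V}$ of $b$, and let $g$ be the family of traces $\mc{V}|_{b_i}$ on $\textrm{dom}(g) = K \cup \{b\}$; compatibility is automatic because the trace of a trace is a trace, and $K \subseteq \textrm{dom}(g)$. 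The Compactness Rule then supplies a filtering extension $f \in \prod_{b \in \mc{B}} A_b$, and the gluing of the sets $X_P = \bigcup_{d \supseteq b} P_d$ into a $D$-bounded $\tau$-covering of $X$ proceeds word for word as in Theorem~\ref{twr:zwr}. I expect the main (though mild) obstacle to be precisely this verification that $S$ covers finite subsets; once the common-superset trick replaces the earlier use of a largest element of a chain, the rest of the machinery is unchanged.
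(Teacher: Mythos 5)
Your overall strategy coincides with the paper's: keep the Fission Theorem machinery, reduce to the locally finite case via $DX$, and feed the covers-finite-sums hypothesis into the Compactness Rule. The first inclusion, the reduction to $D\mc{B}$, and the verification that $S$ covers finite subsets (common superset $b\supseteq b_1\cup\dots\cup b_n$, then traces of a single covering of $b$) are all correct. Keeping the nested-only compatibility condition in the definition of $S$ (the paper instead demands $g(c)_{|c\cap d}=g(d)_{|c\cap d}$ for \emph{all} pairs $c,d$ in the domain) is harmless: a filtering extension $f$ of your $S$ automatically satisfies $f(d)_{|c}=f(c)$ for every nested pair $c\subseteq d$ in $\mc{B}$ (apply the filtering property to $K=\{c,d\}$), and that is all the gluing ever uses.

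The gap is your claim that the gluing of the sets $X_P=\bigcup_{d\supseteq b}P_d$ ``proceeds word for word'' as in Theorem~\ref{twr:zwr}, i.e.\ that the new hypothesis does work only in the covers-finite-subsets step. In the ascending case the gluing works because $\{d\in\mc{B}~|~d\supseteq b\}$ is a chain: any $x\in P_{d_1}$, $x'\in P_{d_2}$ lie in a common $P_{d_i}$ (the larger one), so $X_P$ is $D$-bounded, and disjointness of $X_P$, $X_Q$ is checked inside a single sufficiently large $d$. Under covers-finite-sums the supersets of $b$ are merely directed, so this argument breaks as written and the hypothesis must be invoked again, in two places: for $D$-boundedness one takes $e\in\mc{B}$ with $e\supseteq d_1\cup d_2$ and uses compatibility plus uniqueness to get $P_{d_1},P_{d_2}\subseteq P_e$; for $\tau(k)$-disjointness one must compare $X_P$ and $X_Q$ whose defining elements live in coverings $f(b_P)$, $f(b_Q)$ of possibly incomparable sets, again passing to a common superset, and one additionally needs the observation that $U\subseteq V$ implies $X_U=X_V$, so that two ``colliding'' sets $X_P$, $X_Q$ turn out to be equal rather than witnessing a failure of disjointness. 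This is precisely where the paper spends the bulk of its proof (maximal chains of supersets, the contradiction argument for $D$-boundedness, and the identification $X_P=X_E=X_Q$). The repair is the same common-superset trick you already used for $S$, so the missing idea is close at hand; but the step you declare unchanged is exactly the step that fails if taken literally.
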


\begin{proof}
Repeat previous proof to the moment pointed below. We assume that X is locally finite. Let $S\subset\bigcup_{B\subset\mc{B}}(\Pi_{b\in B}A_b)$ be a set of functions $g\in\bigcup_{B\subset\mc{B}}(\Pi_{b\in B}A_b)$ such that for any pair $c,d\in dom(g)$, traces of $\tau$-coverings $g(d)$ and $g(b)$ of d and b respectively are identical on the intersection $b\cap d$, briefly $g(d)_{|b\cap d}=g(b)_{|b\cap d}$ (we say that such functions define \emph{consistent} $\tau$-coverings/families). $\mc{B}$ covers finite sums which implies that S covers finite subsets and so there exists filtering extension f. We must define due $\tau(k)$-covering of the whole space X. It suffices to define due $\tau(k)$-family for every $k\in dom(\tau)$. 

Function f defines consistent $\tau(k)$-families on elements of $\mc{B}$. Choose any set $b\in\mc{B}$ and any element P of this set. Take from $\mc{B}$ all maximal chains (with respect to inclusion) of supersets of b. Similary as in the former proof, for every such maximal chain c we can define D-bounded set $P_c$, because ascending sum of D-bounded sets is D-bounded itself. 

Set $X_P:=\bigcup_c~P_c$ is D-bouded, too. Indeed, suppose contrary. Then there exist points $x,x'\in X_P$ such that $d(x,x')>D$. This means, there exist sets $P_x\supset b$ and $P_{x'}\supset b$ such that x belongs to (unique) element A belonging to $\tau(k)$-family of the cover $f(P_x)$ and x' belongs to (unique) element A' belonging to $\tau(k)$-family of cover $f(P_{x'})$. From covering of finite sums it follows however, that there exists maximal chain c', which contains the set $d\supset P_x\cup P_{x'}$. From the consistency of $\tau(k)$-families defined by f we have, that there exists unique element of $\tau(k)$-family of cover $f(d)$, which is the superset of the set P. It must be therefore superset of set $A\cup A'$ and it's not D-bounded, contradiction.

It remains to show that sets $X_P$ indexed by sets P create $\tau(k)$-disjoint family. Suppose that there exists $p\in X_P$ i $q\in X_Q$ such that $d(p,q)<\tau(k)$ and P belongs to $\tau(k)$-family of cover $f(b_P)$ for some set $b_P\in\mc{B}$, similary Q belongs to $\tau(k)$-family of cover $f(b_Q)$. There exist supersets $d_P\supset b_P$ and $d_Q\supset b_Q$, which contain - as elements of respective $\tau(k)$-families - unique sets $A_P\supset P$ and $A_Q\supset Q$ such that $p\in A_P$ and $q\in A_Q$. Consider the superset $D\supset d_P\cup d_Q$ such that $D\in\mc{B}$. Since $d(p,q)<\tau(k)$, some superset $E\supset A_P\cup A_Q$ belongs $\tau(k)$-family of cover $f(D)$. 

It's easy to see (from covering of finite sums) that for any sets (for which the following has sense), if $U\subset V$ then $X_U=X_V$. Therefore $X_P=X_E=X_Q$. 

The fact that such families for every $k\in dom(\tau)$ create together a $\tau$-covering of the space X is easily implied by the second condition of our theorem. There ramaining part of the proof goes just as before.
\end{proof}

Similary we prove the following, more general

\begin{twr}Let $\Gamma$ - the family of proper, bounded metric spaces, covering finite sums and such that Trasdim$\Gamma=\alpha$. Then trasdim$(\bigcup\Gamma)=\alpha$(it can also be
$\alpha=\infty$).\end{twr}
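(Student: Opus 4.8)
The plan is to imitate the proof of the preceding theorem but to confront the one genuinely new difficulty: the union $\bigcup\Gamma$ need \emph{not} be proper (for instance, an infinite family of unit spines glued at a common point is bounded yet not compact), so the preceding theorem does not apply verbatim to $X:=\bigcup\Gamma$. The observation that rescues us is that each member of $\Gamma$, being simultaneously bounded and proper, is in fact compact, and it is the compactness of the individual members --- not properness of their union --- that the Compactness Rule actually consumes.

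First I would fix the ambient object. Since $\Gamma$ covers finite sums it is directed by inclusion, so the set-theoretic union $X:=\bigcup\Gamma$ carries a well-defined metric, and $\Gamma$ becomes a family of bounded subspaces of $X$ covering finite sums with $\bigcup\Gamma=X$. Next I would discretise coherently. By Theorem~\ref{twr:rX} (which rests only on Zorn's Lemma and needs no properness) I choose an $r$-net $DX\subset X$ coarsely equivalent to $X$, and put $Db:=b\cap DX$ for $b\in\Gamma$. Each $Db$ is an $r$-separated subset of the compact space $b$, hence finite; moreover $D\Gamma:=\{Db\mid b\in\Gamma\}$ covers finite sums and $\bigcup D\Gamma=DX$. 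Thus I will have replaced $\Gamma$ by a family of \emph{finite} spaces, at the cost of one coarse equivalence.

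With this in hand I would run the two inclusions of the preceding proof on $DX$ and $D\Gamma$. The trace direction is unchanged and uses no finiteness: traces of a $\tau$-covering of $DX$ produce uniformly bounded $\tau$-coverings of all members, so $T^{\rightarrow}(D\Gamma)\subset T^{\rightarrow}(DX)$. For the reverse direction I would let $A_{Db}$ be the set of $D$-bounded $\tau$-coverings of $Db$ --- now finite, since $Db$ is finite, and nonempty by the standing hypothesis --- form the consistency set $S$, and apply the Compactness Rule exactly as before to obtain a filtering extension $f$; the sets $X_P=\bigcup_c P_c$ it yields are $D$-bounded and $\tau(k)$-disjoint by the covering-of-finite-sums argument, which nowhere invokes local finiteness of $DX$. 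This gives $\mathrm{trasdim}(DX)=\mathrm{Trasdim}(D\Gamma)$.

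Finally I would close the loop with coarse invariance and monotonicity. Since $Db\subset b$ we get $\mathrm{Trasdim}(D\Gamma)\le\mathrm{Trasdim}(\Gamma)$; the trace argument applied to $\Gamma$ and $X$ gives $\mathrm{Trasdim}(\Gamma)\le\mathrm{trasdim}(X)$; and $\mathrm{trasdim}(X)=\mathrm{trasdim}(DX)$ by the coarse equivalence. Chaining,
$$\mathrm{trasdim}(DX)=\mathrm{Trasdim}(D\Gamma)\le\mathrm{Trasdim}(\Gamma)\le\mathrm{trasdim}(X)=\mathrm{trasdim}(DX),$$
which forces all four quantities to coincide, whence $\mathrm{trasdim}(\bigcup\Gamma)=\mathrm{Trasdim}(\Gamma)=\alpha$ (and the same chain reads off $\alpha=\infty$ when $T^{\rightarrow}(\Gamma)$ is not well founded). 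The main obstacle, and the step I would scrutinise most carefully, is precisely the Compactness Rule direction: one must verify that only the finiteness of each $Db$ --- guaranteed by the compactness of members of $\Gamma$ --- and not any local finiteness of the possibly non-proper space $DX$, is required for the finite sets $A_{Db}$ and for the $D$-boundedness of the sets $X_P$.
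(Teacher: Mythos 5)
Your proposal is correct and takes essentially the same approach as the paper: the paper's own (very terse) proof likewise sets $X:=\bigcup\Gamma$, observes that $DX$ need not be locally finite but that $b\cap DX$ is finite anyway for each $b\in\Gamma$ (a bounded proper space being compact), and then reruns the preceding theorem's Compactness Rule argument, which is exactly your chain $\mathrm{trasdim}(DX)=\mathrm{Trasdim}(D\Gamma)\leq\mathrm{Trasdim}(\Gamma)\leq\mathrm{trasdim}(X)=\mathrm{trasdim}(DX)$. Your write-up simply makes explicit the details the paper compresses into ``the rest of the proof goes as before.''
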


\begin{proof}
We set X:=$\bigcup\Gamma$. Subspace DX is not necessarily locally finite, but $b\cap DX$ for $b\in\Gamma$ is finite set anyway. The rest of the proof goes as before.
\end{proof}

Now we can prove

\begin{twr}Let $\Gamma$ - family of proper spaces covering finite sums and let Trasdim$\Gamma=\alpha$. Then trasdim$(\bigcup\Gamma)=\alpha$(it can also be
$\alpha=\infty$)\end{twr}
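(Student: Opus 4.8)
The plan is to reduce the statement to the bounded-family version established in the theorem immediately preceding this one, whose only extra hypothesis was that the members of the family be bounded. Concretely, I would fix a basepoint in each $Y\in\Gamma$ and let $\Gamma^{*}$ denote the family of all closed balls of integer radius taken in the members of $\Gamma$. Since each $Y$ is proper, every such ball is compact, hence proper and bounded, so $\Gamma^{*}$ is a family of proper bounded spaces. One checks at once that $\bigcup\Gamma^{*}=\bigcup\Gamma$ (every point lies in a large enough ball of its home space) and that $\Gamma^{*}$ covers finite sums: given finitely many such balls, their union is a bounded set which, by the covering-of-finite-sums property of $\Gamma$, is contained in some $Z\in\Gamma$, hence inside a sufficiently large closed ball of $Z$, and that ball is itself a member of $\Gamma^{*}$.

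The heart of the argument is the equality $\mathrm{Trasdim}\,\Gamma^{*}=\mathrm{Trasdim}\,\Gamma=\alpha$, which I would obtain by showing $T^{\rightarrow}(\Gamma^{*})=T^{\rightarrow}(\Gamma)$ for nondecreasing sequences (the inclusion $L(\Gamma^{*})\subseteq L(\Gamma)$ is immediate from monotonicity of the dimension trees with respect to inclusion, so rank$\,L(\Gamma^{*})$ remains countable). Both inclusions are proved by contraposition. For $T^{\rightarrow}(\Gamma^{*})\subseteq T^{\rightarrow}(\Gamma)$: if $\tau\notin T^{\rightarrow}(\Gamma)$ there is a uniform bound $D$ so that every $Y\in\Gamma$ admits a $D$-bounded $\tau$-cover, and its trace on any ball $b\subseteq Y$ is again a $D$-bounded $\tau$-cover, whence $\tau\notin T^{\rightarrow}(\Gamma^{*})$. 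For the reverse inclusion: if $\tau\notin T^{\rightarrow}(\Gamma^{*})$ there is a uniform $D$ so that \emph{every} ball in $\Gamma^{*}$ carries a $D$-bounded $\tau$-cover; fixing $Y\in\Gamma$, the balls exhausting $Y$ form an ascending family of bounded subspaces with union $Y$, each admitting a $D$-bounded $\tau$-cover, so the Fission Theorem~\ref{twr:zwr} produces a $\tau$-cover of $Y$.

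The step I expect to be the main obstacle — and the reason the whole reduction succeeds — is that the gluing furnished by the proof of Theorem~\ref{twr:zwr} preserves the bound: the $\tau$-cover it builds for $Y$ is $D$-bounded with the \emph{same} $D$ that bounded the covers of the balls. Since this $D$ was chosen once and for all from $\tau\notin T^{\rightarrow}(\Gamma^{*})$, it does not depend on $Y$, so every member of $\Gamma$ has a $D$-bounded $\tau$-cover and $\tau\notin T^{\rightarrow}(\Gamma)$, as required. As in the earlier proofs, if some $Y$ fails to be locally finite one first passes, via Theorem~\ref{twr:rX}, to a discrete closed subspace coarsely equivalent to $Y$, on which the compactness-rule gluing applies verbatim; this reduction is already internal to Theorem~\ref{twr:zwr}. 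With $\mathrm{Trasdim}\,\Gamma^{*}=\alpha$ in hand, the preceding bounded-family theorem applies to $\Gamma^{*}$ and gives $\mathrm{trasdim}(\bigcup\Gamma^{*})=\alpha$; since $\bigcup\Gamma^{*}=\bigcup\Gamma$, this is exactly $\mathrm{trasdim}(\bigcup\Gamma)=\alpha$.
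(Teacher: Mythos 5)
Your overall reduction is exactly the paper's: the paper also replaces $\Gamma$ by a family of bounded proper pieces with the same union, covering finite sums---it takes $\Theta=\{\gamma\cap B_i\mid\gamma\in\Gamma,\ i\in\mathbb{N}\}$ for a concentric sequence of balls $B_i$ in $\bigcup\Gamma$, where you take balls about a basepoint chosen in each member---and then cites the immediately preceding theorem on proper bounded families. The paper asserts the key property Trasdim$\,\Theta=\alpha$ with no verification at all; you attempt that verification, and that is where the one genuine problem in your write-up sits.

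The problematic step is your claim that the gluing of Theorem~\ref{twr:zwr} hands each $Y\in\Gamma$ a $\tau$-cover that is $D$-bounded with the \emph{same} $D$, so that $T^{\rightarrow}(\Gamma^{*})=T^{\rightarrow}(\Gamma)$ as trees. This is correct when $Y$ is locally finite, but for a general proper $Y$ the compactness-rule gluing cannot be run on $Y$ itself (the sets $A_b$ of $D$-bounded $\tau$-covers of a ball must be \emph{finite} for the Compactness Rule, which forces the balls to be finite sets); it is run on a discrete closed $1$-separated subspace $DY$ with $Y=D_1(DY)$ (Theorem~\ref{twr:rX}), and inside the proof of Theorem~\ref{twr:zwr} the passage back from $DY$ to $Y$ happens only at the level of ranks, via coarse equivalence---not at the level of covers with fixed parameters. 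If you transport the glued $D$-bounded $\tau$-cover of $DY$ back to $Y$ by taking $1$-neighbourhoods, you get a $(D+2)$-bounded but only $(\tau(k)-2)$-disjoint cover, so ``every member of $\Gamma$ has a $D$-bounded $\tau$-cover'' does not follow, and the literal tree equality is not established. The repair is routine and, crucially, uniform in $Y$: your argument really shows that if every ball admits a $D$-bounded cover with disjointness constants $\tau(k)+2$, then every $Y\in\Gamma$ admits a $(D+2)$-bounded $\tau$-cover; hence $\tau\mapsto(\tau(k)+2)_{k}$ is a t-embedding of $T^{\rightarrow}(\Gamma)$ into $T^{\rightarrow}(\Gamma^{*})$, and together with your (correct) trace inclusion $T^{\rightarrow}(\Gamma^{*})\subseteq T^{\rightarrow}(\Gamma)$, Theorem~\ref{twr:twl} yields rank$\,T^{\rightarrow}(\Gamma^{*})=\,$rank$\,T^{\rightarrow}(\Gamma)$, with either tree well founded iff the other is (this also covers $\alpha=\infty$). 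That rank equality, rather than tree equality, is all you need to feed $\Gamma^{*}$ into the preceding theorem, so your proof goes through after this adjustment.
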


\begin{proof}
It suffices to show that $\bigcup\Gamma$ is equal to sum of some family $\Theta$ of proper and bounded spaces, such that $\Theta$ covers finite sums and Trasdim$\Theta=\alpha$. In the space $\bigcup\Gamma$ fix concentric sequence of balls $(B_i)_{i=1}^\infty$ of radious raising to infinity. Define $\Theta=\{\gamma\cap B_i~|~\gamma\in\Gamma, i\in\m{N}\}$.
\end{proof}

Note that these proofs imply some interesting facts:
\\\\
We say that the space X is \emph{coarsly separable} if it contains countable subspace DX such that $X=D_R(DX)$ for some $R<\infty$.

\begin{stw}Ascending family of proper spaces is coarsly separable. Moreover, every discrete and closed subset of such sum is countable.\end{stw}

\begin{proof}
Take any discrete and closed subset of this sum. The preceding proofs imply that this set is countable (indexed by $i\in\m{N}$) sum of ascending sums of finite sets. Such sum is always countable.
\end{proof}

\chapter{Open problems}

\section{Closure under products}\label{sec:produkt}

Problem stated~in~\cite{Pearl}, for solution see theorem~\ref{twr:produkt}.

\section{Connection with FDC}

The reader is ecouraged to familiarize with~\cite{GTY}. There is neat connection between asymptotic properties C and FDC. It was well know that asymptotic property C implies asymptotic property FDC, however the opposite implication hadn't been determined up to not long ago (it was suggested by Dranishnikow himself). In Crocow took place the lecture (see.~\cite{Zar}) in which Zarychnyj proved the strict inclusion $C\subset FDC$. As far as I know Zarychnyj did not publish proper article yet.

\section{Find the space of a given countable dimension trasdim}\label{sec:konstrukcja}

Open problem stated in~\cite{Pearl}. I give partial answer and buch of intuitions in chapter~\ref{sec:przyklady} (Examples) and in chapter~\ref{chap:hipotezy}. Also chapter~\ref{sec:sumy} may be useful here, since it turns out that in case of proper spaces one can restrict himself, without loss of generality, to asymptotic sums (moreover one can sum finite spaces). It's also important here to solve problem~\ref{sec:monotone}.

\section{Monotonicity}\label{sec:monotone}

Is it true that for any countable ordinal $\alpha$ (or for $\alpha=\infty$) and for any countable $\beta<\alpha$, in every space X with dimension trasdimX$=\alpha$ there exists subspace Y with dimension trasdimY$=\beta$?

\section{Optimal bound for dimension of finite sum of spaces}\label{sec:ogrsumy}

As in the title. Such bound is interesting itself and would be useful in many aspects. For example in assertion~\ref{stw:assum} one could - perhaps - get rid of the assumption that the spaces are bounded.

\chapter{Hipothesis and some comments}\label{chap:hipotezy}

Let's go back to the example~\ref{eq:koronny} from chapter~\ref{sec:przyklady}.

\begin{hip}\label{hip:bigcupc}For every increasing sequence of natural numbers c, trasdim(~$\bigcup~c)=\omega$.\end{hip}

Argumentation. Fix c and consider dimension game for the space $\bigcup~c$. It suffices to show that in any dimension game the second round is a moment of stabilization. Let's use the notation from chapter~\ref{sec:gra}. Fix $r_1$ and suppose that $r_1>2c_i$. Considering the subspace $c_1\m{Z}_1\varoplus c_2\m{Z}_2\varoplus\dots\varoplus c_{i}\m{Z}_{i}$ and keeping in mind TFF (see chapter~\ref{sec:przyklady}) it must be $k_1>i$. We should show that $k_2$ can be determined independently of $r_2$. Find the least j such that $c_j>r_1$. Fix $r_2$ and find the least l such that $c_l>r_2$. Without loss of generality we assume that $l>j$. It suffices to show that every subspace of type $X:=c_1\m{Z}_1\varoplus c_2\m{Z}_2\varoplus\dots\varoplus c_{l}\m{Z}_{l}$ can be covered with adequately disjoint family of power $j+2$. Here the intuition is such that for every fixed $r_2$ all subspaces of X of type $c_1\m{Z}_1\varoplus c_2\m{Z}_2\varoplus\dots\varoplus c_{j}\m{Z}_{j}$ (briefly we say about j-type subspace) can be simultaniously ''strongly'' holed with one $r_1$-disjoint family consisting of adequately large j-dimensional cubes. How ''strongly''? The idea is as follows - we pick one such j-type subspace, hole it with large cubes and then cover the complement of such holed j-type subspace with $r_2$-cover of power $(j+1)$. The j-type subspaces adjacent to ours (keep in mind we are in X) are covered identically (with respect to the obvious isomorphism) with the only difference that all families are translated by some specific vector. Our goal is to achieve such ''holing'' of the space X (with the $r_1$-family) that the sum of corresponding $r_2$-families of all j-type subspaces is $r_2$-disjoint family. Thus we would be able to cover the hole X with adequately disjoint cover of power $(j+2)$. End of argumentation.
\\\\
One interesting problem is the problem~\ref{sec:konstrukcja}. The sole possibility of positive answer to the problem~\ref{sec:monotone} raises the importance of finding the space of trasdim $\omega+1$. The reader can easily see that the naive try to repeat the trick with asymptotic sums for limit $\alpha$ in case of successor $\alpha$ soon raise difficulties - the constructed spaces turn out to have trasdim$=\infty$. Such tries and the example~\ref{eq:koronny} suggest the hipothesis:

\begin{hip}\emph{Omega conjecture}. If $\omega\leq trasdimX<\infty$ then $trasdimX=\omega$.\end{hip}

Various intuitions behind the omega conjecture can be summed up in one saying that the necessity of fixing two numbers implies that the space is already to ''dense asymptotically'' and has the dimension $\infty$ as in the naive tries of creating the space with dimension $\omega+1$. For example the intuition presented in the argumentation behind the hipothesis~\ref{hip:bigcupc} says that if player A has the winning strategy, then he is always able to win ''with one family''. So one r-disjoint family seems to be very strong tool. The hipothesis however is in the first place the interesting course of study and it inspired most of my work.

\end{document}